\DeclareFontFamily{U}{euf}{}
\DeclareFontShape{U}{euf}{m}{n}{%
  <5><6><7><8><9>gen*eufm%
  <10><10.95><12><14.4><17.28><20.74><24.88>eufm10%
  }{}
\DeclareFontShape{U}{euf}{b}{n}{%
  <5><6><7><8><9>gen*eufb%
  <10><10.95><12><14.4><17.28><20.74><24.88>eufb10%
  }{}
\DeclareFontFamily{U}{msb}{}
\DeclareFontShape{U}{msb}{m}{n}{%
  <5><6><7><8><9>gen*msbm%
  <10><10.95><12><14.4><17.28><20.74><24.88>msbm10%
  }{}
\DeclareFontFamily{U}{msa}{}
\DeclareFontShape{U}{msa}{m}{n}{%
  <5><6><7><8><9>gen*msam%
  <10><10.95><12><14.4><17.28><20.74><24.88>msam10%
  }{}
\newtheorem{theorem}{Theorem}[section]
\newtheorem{lemma}[theorem]{Lemma}
\newtheorem{proposition}[theorem]{Proposition}
\newtheorem{corollary}[theorem]{Corollary}
\theoremstyle{definition}
\newtheorem{remark}[theorem]{Remark}
\numberwithin{equation}{section} \frenchspacing
\begin{document}

\title[]
{Special values and integral representations for the Hurwitz-type Euler zeta functions}

\author{Su Hu}
\address{Department of Mathematics, South China University of Technology, Guangzhou, Guangdong 510640, China}
\email{mahusu@scut.edu.cn}

\author{Daeyeoul Kim}
\address{Department of Mathematics and Institute of Pure and Applied Mathematics, Chonbuk National University, 567 Baekje-daero, deokjin-gu, Jeonju-si, Jeollabuk-do 54896, Korea}
\email{kdaeyeoul@jbnu.ac.kr}

\author{Min-Soo Kim}
\address{Division of Mathematics, Science, and Computers, Kyungnam University, 7(Woryeong-dong) kyungnamdaehak-ro, Masanhappo-gu, Changwon-si,
Gyeongsangnam-do 51767, Korea}
\email{mskim@kyungnam.ac.kr}


\begin{abstract}
The Hurwitz-type Euler zeta function is defined as a deformation of the Hurwitz zeta function:
\begin{equation*}
\zeta_E(s,x)=\sum_{n=0}^\infty\frac{(-1)^n}{(n+x)^s}.
\end{equation*}
In this paper, by using the method of Fourier expansions, we shall evaluate  several integrals with integrands involving
Hurwitz-type Euler zeta functions $\zeta_E(s,x)$. Furthermore, the relations between the values of
a class of the Hurwitz-type (or Lerch-type) Euler zeta functions at rational arguments have also been given.

\end{abstract}

\subjclass[2000]{33B15, 33E20, 11M35, 11B68, 11S80}
\keywords{Hurwitz zeta functions, Euler polynomials, Integrals, Fourier series }


\maketitle




\section{Introduction}
\label{Intro}

The Hurwitz zeta function is defined by
\begin{equation}~\label{Hurwitz}
\zeta(s,x)=\sum_{n=0}^{\infty}\frac{1}{(n+x)^{s}}
\end{equation}
for $s\in\mathbb{C}$ and $x\neq0,-1,-2,\ldots,$ and the series converges for Re$(s)>1,$ so that $\zeta(s,x)$ is an analytic
function of $s$ in this region.
Setting $x=1$ in (\ref{Hurwitz}), it reduces to the Riemann zeta function
\begin{equation}~\label{o-Riemann}
\zeta(s)=\sum_{n=1}^{\infty}\frac{1}{n^{s}}.
\end{equation}
From the well-known identity
$\zeta(s,\frac12)=(2^s-1)\zeta(s),$ we see that the only real zeroes of $\zeta(s,x)$ with $x=\frac12$ are $s=0,-2,-4, \ldots.$

Its special values at nonpositive integers are Bernoulli polynomials, that is,
\begin{equation}~\label{H-Sp}
\zeta(1-m,x)=-\frac{1}{m}B_{m}(x), \quad m\in\mathbb N
\end{equation}
(see \cite[p. 162, (1.19)]{EM}).
Here the Bernoulli polynomials $B_{m}(x)$ are defined by their generating function
\begin{equation}~\label{def-ep}
\frac{ze^{xz}}{e^{z}-1}=\sum_{m=0}^{\infty}B_{m}(x)\frac{z^{m}}{m!},\quad |z| < \pi.
\end{equation}
Bernoulli polynomials have many interesting properties and arise in various areas of
mathematics (see \cite{Ber,SC,SO}).
Setting  $x=0$ in (\ref{def-ep}), we get the Bernoulli numbers
$$B_m=B_m(0),\quad m\in\mathbb N_0=\mathbb N\cup\{0\}.$$
From an easy manipulations of the generating function (\ref{def-ep}), we see that $B_m(1)=B_m~(m\neq1)$ and $ B_1(1)=-B_1,$
where $m\in\mathbb N_0$ (see \cite[p. 1480, (2.3)]{DV} and \cite[p.~529, (17)]{SC}).

The Hurwitz zeta function is a fundamental tool for  studying Stark's conjectures of algebraic number fields,
since it represents the partial zeta function of cyclotomic field (see e.g., \cite[p. 993, (4.2)]{Gross} or \cite[p.~4248]{HK-G}).
It also plays a role in the evaluation of fundamental determinants that appears in mathematical physics
(see \cite{Eliz} or \cite[p. 161, (1.14) and (1.15)]{EM}).

In this paper, we consider the Hurwitz-type Euler zeta function, which  is defined as a deformation of the Hurwitz zeta function
\begin{equation}\label{HEZ}
\zeta_E(s,x)=\sum_{n=0}^\infty\frac{(-1)^n}{(n+x)^s},
\end{equation}
for $s\in\mathbb{C}$ and $x\neq0,-1,-2,\ldots.$ This series converges for Re$(s)>0$, and it  can be analytically
continued to the complex plane without any pole.
It also represents a partial zeta function of cyclotomic fields in one version of Stark's conjectures in algebraic number theory (see \cite[p. 4249, (6.13)]{HK-G}),
and its special case, the alternative series,
\begin{equation}~\label{Riemann}
\zeta_{E}(s)=\sum_{n=1}^{\infty}\frac{(-1)^{n-1}}{n^{s}},
\end{equation}
is also a particular case of Witten's zeta functions in mathematical physics.  (See \cite[p. 248, (3.14)]{Min}).
We here mention that several  properties of $\zeta_E(s)$ can be found  in \cite{AS,Ay,CS}.
For example, in the form on \cite[p.~811]{AS}, the left hand side is the special values of the Riemann zeta functions at positive integers,
and  the right hand side is the special values of Euler zeta functions at positive integers.

Using the Fourier expansion formula
\begin{equation}\label{H-F-Exp}
\zeta(s,x)=\frac{2\Gamma(1-s)}{(2\pi)^{1-s}}\sum_{n=1}^\infty\frac{\sin\left(2n\pi x+\frac{\pi s}{2}\right)}{n^{1-s}},
\end{equation}
Espinosa and Moll \cite{EM} evaluated several fundamental integrals with integrands involving  Hurwitz zeta functions,
where $\Gamma$ is the Euler gamma-function (see, e.g., \cite{Mi} or \cite[Chapter 43]{SO}),
who studied ``the Hurwitz transform'' meaning an integral over $[0,1]$
of a Fourier series multiplied by the Hurwitz zeta function $\zeta(s,x),$ and obtained numerous results for those which aries
from the Hurwitz formula. Mez\H{o}~\cite{Mezo} determined the Fourier series expansion of the log-Barnes function, which is the analogue of the classical result of Kummer and Malmsten.
Applying this expansion, he also got some integrals similar to the Espinosa--Moll $\log$--Gamma integrals with respect to $\log G$ in \cite{EM,EM2}, where $G$ is Barnes $G$ function.

Recently, Shpot, Chaudhary and Paris~\cite{shpot} evaluated two integrals over $x\in [0,1]$ involving products of the function $\zeta_{1}(s,x)=\zeta(s,x)-x^{-a}$ for Re$(s) > 1$. As an application, they also calculated  the $O(g)$ weak-coupling expansion coefficient $c_{1}(\epsilon)$ of the Casimir energy for a film with Dirichlet-Neumann boundary conditions, first stated by Symanzik~\cite{symanzik} under the framework of $g\phi_{4-\epsilon}^{4}$ theory in quantum physics.

Here we will follow the approach of Espinosa and Moll in \cite{EM} to  evaluate  several integrals with integrands involving  Hurwitz-type Euler zeta functions $\zeta_E(s,x)$.

Our main tool is  the following Fourier expansion of $\zeta_E(s,x)$
\begin{equation}\label{F-Exp}
\zeta_E(s,x)=\frac{2\Gamma(1-s)}{\pi^{1-s}}\sum_{n=0}^\infty\frac{\sin\left((2n+1)\pi x+\frac{\pi s}{2}\right)}{(2n+1)^{1-s}}
\end{equation}
(comparing with (\ref{H-F-Exp}) above).
This expression, valid for ${\rm Re}(s)<1$ and $0<x\leq1,$ is derived by Williams and Zhang \cite[p.~36, (1.7)]{WY}.

The paper is organized as follows.

In Section \ref{f-e-ze}, we determine  the Fourier coefficients of $\zeta_E(s,x).$

In Section \ref{p-h-e-zeta}, we evaluate integrals with integrands consisting of products of two Hurwitz-type Euler zeta functions.
These will imply some classical relations for the integral of products of two Euler polynomials
(see Proposition \ref{ex-rem} and \ref{ex-rem-3}). We also evaluate the moments of the Hurwitz-type Euler zeta functions and
the Euler polynomials (see Proposition \ref{ex-rem-2} and \ref{ex-rem-3}).

In Section \ref{ex-ft}, we prove several further consequences of integrals with integrands consisting of products of the Hurwitz-type Euler zeta functions,
the exponential function, the power of sine and cosine.

In Section \ref{ex-cata}, we obtain an Euler-type formula for the Dirichlet beta function $\beta(2m)$ and
consider the Catalan's constant $G$ compiled by Adamchik in \cite[Entry 19]{Ad}.
From this, we get a finite closed-form expression for any $\beta(2m)$ in terms of the elementary functions (see \cite{Li}).

In Section \ref{v-le-rat}, we prove the relations between values of
a class of the Hurwitz-type (or Lerch-type) Euler zeta functions at rational arguments.

\section{Preliminaries}

The methods employed in this section may also be used to derive several results of the Hurwitz-type Euler zeta functions,
which are analogous of the known results for Hurwitz zeta functions.

The series expansion (\ref{HEZ}) for $\zeta_E(s,x)$ has a meaning if ${\rm Re}(s)>0.$ In the following, we recall some basic results of this functions given in \cite{WY}.

From (\ref{HEZ}), we can easily derive the following basic properties for $\zeta_E(s,x)$
\begin{equation}\label{HZ-inf-b}
\zeta_E(s,x)+\zeta_E(s,x+1)=x^{-s},
\end{equation}
\begin{equation}\label{HZ-inf-m}
\zeta_E(s,kx)=k^{-s}\sum_{n=0}^{k-1}(-1)^n \zeta_{E}\left(s, \frac nk +x\right)\quad\text{for $k$ odd},
\end{equation}
\begin{equation}\label{HZ-inf}
\zeta_E(s,x)-x^{-s}=-\sum_{n=0}^\infty\binom{-s}{n}\zeta_E(s+n)x^n
\end{equation}
(see \cite[(5), (6) and (7)]{Ap}).
For fixed $s\neq 1,$ the series in (\ref{HZ-inf}) converges absolutely for $|x|<1.$
For Re$(s)>1,$ (\ref{o-Riemann}) and (\ref{Riemann}) are connected by the following equation
\begin{equation}\label{ez-HZ-r}
\zeta_E(s)=(1-2^{1-s})\zeta(s).
\end{equation}

In \cite[\S 3]{WY}, Williams and Zhang established
the following integral representation
\begin{equation}\label{con-int-E}
\zeta_E(s,x)=\frac{e^{-\pi is}\Gamma(1-s)}{2\pi i}
\int_C\frac{e^{(1-x) z}z^{s-1}}{e^{z}+1}dz,
\end{equation}
where $C$ is the contour consisting of the real axis from $+\infty$ to $\epsilon,$ the circle $|z|=\epsilon$ and the real axis from $\epsilon$ to
$+\infty.$
 By (\ref{con-int-E}) and the generating function of the Euler  polynomials
\begin{equation}\label{Eu-pol}
\frac{2e^{xz}}{e^z+1}=\sum_{n=0}^\infty E_n(x)\frac{z^n}{n!}, \quad |z|<\pi ,
\end{equation}
we have
$$\zeta_E(-m,x)=\frac{(-1)^mm!}{4\pi i}
\int_{|z|=\epsilon}\sum_{n=0}^\infty\frac{E_n(1-x)}{n!}\frac{z^n}{z^{m+1}}dz,$$
that is,
\begin{equation}\label{EZ-Eu}
\zeta_E(-m,x)=\frac{(-1)^m}{2}E_m(1-x)=\frac12 E_m(x),\quad m\in\mathbb N_0
\end{equation}
(comparing with (\ref{H-Sp}) above).
 Setting $s=-m$ in (\ref{ez-HZ-r}), from (\ref{H-Sp}) and (\ref{EZ-Eu})  it is easy to see that
\begin{equation}\label{eu-be}
E_m(0)=-E_m(1)=\frac2{m+1}\left(1-2^{m+1}\right)B_{m+1}
\end{equation}
(see \cite[p.~529, (16)]{SC}).

The above expansion (\ref{EZ-Eu}) has also been derived in \cite[p.~41, (3.8)]{WY}.

The integers
$$E_{m}=2^{m}E_{m}\left(\frac{1}{2}\right),\quad m\in\mathbb N_0,$$
are called $m$-th Euler numbers.
For example, $E_0=1,E_2=-1,E_4=5,$ and $E_6=-61.$
Notice that the Euler numbers with odd subscripts vanish, that is, $E_{2m+1}=0$ for all $m\in\mathbb N_0.$
The Euler polynomials can be expressed in terms of the Euler numbers in the following way:
\begin{equation}\label{E-nu-pol-re}
E_m(x)=\sum_{i=0}^m\binom mi \frac{E_i}{2^i}\left(x-\frac12\right)^{m-i}
\end{equation}
(see \cite[p.~531, (29)]{SC}).

Some properties of Euler polynomials can be easily derived from their generating functions,
for example, from (\ref{Eu-pol}), we have
\begin{equation}\label{id-1}
E_m(x+1)+E_m(x)=2x^m,\quad m\in\mathbb N_0,
\end{equation}
which have numerous applications (see \cite[p. 1489, (5.4)]{DV} and \cite[p.~530, (23) and (24)]{SC}).

For further   properties of the Euler polynomials and numbers including their applications, we refer to
\cite{AS,Ba,GR,KH,SC}.
It may be interesting to point out that there is also a connection between the generalized Euler numbers and the ideal class group of the
$p^{n+1}$-th cyclotomic field when $p$ is a prime number.
For details, we refer to a recent paper~\cite{HK-I}, especially~\cite[Proposition 3.4]{HK-I}.

The result
\begin{equation}\label{e-1}
\int_0^1\zeta_E(s,x)dx=\frac{4\Gamma(1-s)}{\pi^{2-s}}\cos\left(\frac{\pi s}{2}\right)\lambda(2-s),
\end{equation}
valid for ${\rm Re}(s)<1,$ follows directly from the representation (\ref{F-Exp}).
Here $\lambda(s)$ is the Dirichlet lambda function
\begin{equation}\label{lam-def}
\lambda(s)=\sum_{n=0}^\infty\frac1{(2n+1)^s}=(1-2^{-s})\zeta(s),\quad {\rm Re}(s)>1,
\end{equation}
where $\zeta(s)$ is the Riemann zeta function (see \cite[p. 807, 23.2.20]{AS} and \cite[p. 32, Eq.~3:6:2]{SO}).
If we put $s=-2m$  with $m\in\mathbb N_0$ in (\ref{e-1})
and use (\ref{EZ-Eu}), then we obtain
\begin{equation}\label{lam-def-sp}
\int_0^1E_{2m}(x)dx=\frac{8(2m)!(-1)^m}{\pi^{2m+2}}\lambda(2m+2).
\end{equation}
If we replace $m$ by $2m$ and let $n=0$ in (\ref{ex-5}) below, then the left hand side of (\ref{lam-def-sp}) becomes
\begin{equation}\label{lam-def-sp-2}
\int_0^1E_{2m}(x)dx=-\frac{2(2m)!}{(2m+1)!}E_{2m+1}(0),
\end{equation}
where $m\in\mathbb N_0.$
From (\ref{lam-def-sp}) and (\ref{lam-def-sp-2}), the values of the Dirichlet lambda function
at even positive integers is given as
\begin{equation}\label{fa-fo}
\lambda(2m)=(-1)^m\frac{\pi^{2m}}{4(2m-1)!}E_{2m-1}(0),
\end{equation}
where $m\in\mathbb N.$
In particular, if we put $m=1,m=2$ and $m=3$ in (\ref{fa-fo}), then we have the first few values
$\lambda(2)=\frac{\pi^2}8,\lambda(4)=\frac{\pi^4}{96}$ and $\lambda(6)=\frac{\pi^6}{960}$ (see \cite[p.~808]{AS}).

\section{The Fourier expansion of $\zeta_E(s,x)$}\label{f-e-ze}

In this section, by using the Fourier expansion (\ref{F-Exp}) for $\zeta_E(s,x),$ we shall evaluate definite
integrals of the Hurwitz-type transform
$$\mathfrak{H}(f,s)=\int_0^1f(x)\zeta_E(s,x)dx.$$
The expansion is valid for ${\rm Re}(s)<1.$
For the basics we refer the reader to definite integrals of the Hurwitz transform, e.g., \cite{EM} and \cite{Li09}.

The following is the Fourier coefficients of $\zeta_E(s,x),$ as a direct consequence of (\ref{F-Exp})
(see e.g., \cite[p.~164, Proposition 2.1]{EM}).

\begin{proposition}\label{lem1}
The Fourier coefficients of $\zeta_E(s,x)$ are given by
\begin{equation}\label{sin-ET}
\int_0^1\sin((2k+1)\pi x)\zeta_E(s,x)dx=\frac{\pi^s(2k+1)^{s-1}}{2\Gamma(s)}\csc\left(\frac{\pi s}{2}\right)
\end{equation}
and
\begin{equation}\label{cos-ET}
\int_0^1\cos((2k+1)\pi x)\zeta_E(s,x)dx=\frac{\pi^s(2k+1)^{s-1}}{2\Gamma(s)}\sec\left(\frac{\pi s}{2}\right).
\end{equation}
\end{proposition}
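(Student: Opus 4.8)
The plan is to compute the two integrals by substituting the Fourier expansion (\ref{F-Exp}) for $\zeta_E(s,x)$ and then using the elementary orthogonality relations for $\sin$ and $\cos$ on $[0,1]$ against the odd-frequency trigonometric system $\{\sin((2n+1)\pi x)\},\{\cos((2n+1)\pi x)\}$. Since (\ref{F-Exp}) is valid for ${\rm Re}(s)<1$ and $0<x\le 1$, first I would fix such an $s$, insert the series, and (assuming the interchange of sum and integral is legitimate, which I discuss below) reduce the left-hand side of (\ref{sin-ET}) to
\[
\frac{2\Gamma(1-s)}{\pi^{1-s}}\sum_{n=0}^\infty\frac{1}{(2n+1)^{1-s}}\int_0^1\sin((2k+1)\pi x)\sin\!\left((2n+1)\pi x+\tfrac{\pi s}{2}\right)dx,
\]
and similarly for (\ref{cos-ET}) with the outer $\sin((2k+1)\pi x)$ replaced by $\cos((2k+1)\pi x)$.

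The key step is to evaluate the inner integral. Expanding $\sin((2n+1)\pi x+\tfrac{\pi s}{2})=\sin((2n+1)\pi x)\cos\tfrac{\pi s}{2}+\cos((2n+1)\pi x)\sin\tfrac{\pi s}{2}$, I would use the standard facts that on $[0,1]$ one has $\int_0^1\sin((2k+1)\pi x)\sin((2n+1)\pi x)\,dx=\tfrac12\delta_{kn}$, $\int_0^1\cos((2k+1)\pi x)\cos((2n+1)\pi x)\,dx=\tfrac12\delta_{kn}$, and $\int_0^1\sin((2k+1)\pi x)\cos((2n+1)\pi x)\,dx=0$ for all $k,n\ge 0$ (the last because the sum of two odd multiples of $\pi$ is an even multiple, and $\int_0^1\sin(2m\pi x)\,dx=0$; the cross term with $(2k+1)-(2n+1)$ also integrates to $0$). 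Hence in the $\sin$ case only the $n=k$ term survives and it contributes $\tfrac12\cos\tfrac{\pi s}{2}$, while in the $\cos$ case only the $n=k$ term survives contributing $\tfrac12\sin\tfrac{\pi s}{2}$. This collapses each series to a single term:
\[
\frac{2\Gamma(1-s)}{\pi^{1-s}}\cdot\frac{1}{(2k+1)^{1-s}}\cdot\frac12\cos\tfrac{\pi s}{2}
\quad\text{and}\quad
\frac{2\Gamma(1-s)}{\pi^{1-s}}\cdot\frac{1}{(2k+1)^{1-s}}\cdot\frac12\sin\tfrac{\pi s}{2}.
\]
Finally I would convert to the form stated using the reflection formula $\Gamma(s)\Gamma(1-s)=\pi/\sin(\pi s)$ together with the duplication identity $\sin(\pi s)=2\sin\tfrac{\pi s}{2}\cos\tfrac{\pi s}{2}$: writing $\Gamma(1-s)=\pi/(\Gamma(s)\sin\pi s)$, the coefficient $\Gamma(1-s)\cos\tfrac{\pi s}{2}$ becomes $\pi\cos\tfrac{\pi s}{2}/(\Gamma(s)\cdot 2\sin\tfrac{\pi s}{2}\cos\tfrac{\pi s}{2})=\pi/(2\Gamma(s)\sin\tfrac{\pi s}{2})=(\pi/2)\Gamma(s)^{-1}\csc\tfrac{\pi s}{2}$, and likewise $\Gamma(1-s)\sin\tfrac{\pi s}{2}=(\pi/2)\Gamma(s)^{-1}\sec\tfrac{\pi s}{2}$. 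Substituting back gives exactly (\ref{sin-ET}) and (\ref{cos-ET}). Both sides are analytic in $s$ on the relevant domain, so the identities persist wherever all terms are defined.

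The main obstacle is justifying the term-by-term integration, since the Fourier series (\ref{F-Exp}) converges only conditionally for real $s<1$ (the general term is $O((2n+1)^{\mathrm{Re}(s)-1})$, which is not absolutely summable when $0\le\mathrm{Re}(s)<1$). I would handle this either by first restricting to $\mathrm{Re}(s)<0$, where the series converges absolutely and uniformly in $x$ so the interchange is immediate, obtaining the identities there, and then extending to $\mathrm{Re}(s)<1$ by analytic continuation in $s$ (both sides being holomorphic); or, alternatively, by invoking that the partial sums of (\ref{F-Exp}) are uniformly bounded and appealing to dominated/Abel summation against the bounded test functions $\sin((2k+1)\pi x)$, $\cos((2k+1)\pi x)$, exactly as in the Hurwitz-zeta case treated by Espinosa and Moll \cite[Proposition 2.1]{EM}. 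Everything else is a short trigonometric computation.
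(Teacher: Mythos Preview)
Your proposal is correct and follows essentially the same approach as the paper: the paper's proof simply says to apply the orthogonality of the trigonometric functions together with the expansion (\ref{F-Exp}), citing \cite[Proposition 2.1]{EM} for the analogous Hurwitz-zeta computation. Your write-up just makes this explicit, including the reflection-formula simplification and the justification of the term-by-term integration.
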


\begin{remark}Recently, Luo~\cite{Luof}, Bayad~\cite{Ba2}, Navas, Francisco and
Varona~\cite{NFV} investigated Fourier expansions of the
Apostol-Bernoulli  polynomials, which are special values of Hurwitz-Lerch zeta function at non-positive integers~\cite[Lemma 2.1]{KH2}.
\end{remark}

\begin{proof}[Proof of Proposition \ref{lem1}]
The proof follows from the similar argument with \cite[Proposition 2.1]{EM}, by applying the orthogonality of the trigonometric functions
and the expansion (\ref{F-Exp}) above.
\end{proof}

In the following proposition, we shall compute the Hurwitz-type transform of powers of sine and cosine, including some related integrals
(see \cite[\S11]{EM}).

\begin{proposition}
Let $s\in\mathbb R_0^-=(-\infty,0]$ and $n\in\mathbb N.$ Then we have
\begin{equation}\label{m-sin-ET}
\begin{aligned}
\int_0^1\sin^{2n-1}(\pi x)\zeta_E(s,x)dx=\frac{\Gamma(1-s)}{\pi^{1-s}2^{2n-2}}\cos\left(\frac{\pi s}{2}\right)
\sum_{k=0}^{n-1}\frac{(-1)^k}{(2k+1)^{1-s}}\binom{2n-1}{n-k-1}
\end{aligned}
\end{equation}
and
\begin{equation}\label{m-cos-ET}
\begin{aligned}
\int_0^1\cos^{2n-1}(\pi x)\zeta_E(s,x)dx =\frac{\Gamma(1-s)}{\pi^{1-s}2^{2n-2}}\sin\left(\frac{\pi s}{2}\right)
\sum_{k=0}^{n-1}\frac{1}{(2k+1)^{1-s}}\binom{2n-1}{n-k-1}.
\end{aligned}
\end{equation}
\end{proposition}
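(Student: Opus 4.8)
The plan is to reduce the two integrals to the Fourier coefficients already recorded in Proposition~\ref{lem1}. The key input is the pair of classical power-reduction identities for odd powers,
\begin{equation*}
\sin^{2n-1}(\pi x)=\frac{1}{2^{2n-2}}\sum_{k=0}^{n-1}(-1)^k\binom{2n-1}{n-k-1}\sin((2k+1)\pi x)
\end{equation*}
and
\begin{equation*}
\cos^{2n-1}(\pi x)=\frac{1}{2^{2n-2}}\sum_{k=0}^{n-1}\binom{2n-1}{n-k-1}\cos((2k+1)\pi x).
\end{equation*}
These follow by writing $\sin(\pi x)=(e^{i\pi x}-e^{-i\pi x})/(2i)$ and $\cos(\pi x)=(e^{i\pi x}+e^{-i\pi x})/2$, expanding by the binomial theorem, grouping the terms with conjugate exponents $\pm i(2n-1-2j)\pi x$, and reindexing by $k=n-1-j$ so that $2n-1-2j=2k+1$; since the exponent $2n-1$ is odd, only the odd frequencies $2k+1$ survive, which is precisely what makes Proposition~\ref{lem1} applicable. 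Note also that $s\in\mathbb R_0^-$ forces ${\rm Re}(s)<1$, so (\ref{F-Exp}), and hence (\ref{sin-ET}) and (\ref{cos-ET}), are valid.

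Next I would multiply each identity by $\zeta_E(s,x)$, integrate over $[0,1]$, and interchange the finite sum with the integral. Substituting (\ref{sin-ET}) term by term gives
\begin{equation*}
\int_0^1\sin^{2n-1}(\pi x)\zeta_E(s,x)\,dx=\frac{1}{2^{2n-2}}\cdot\frac{\pi^s}{2\Gamma(s)}\csc\left(\frac{\pi s}{2}\right)\sum_{k=0}^{n-1}\frac{(-1)^k}{(2k+1)^{1-s}}\binom{2n-1}{n-k-1},
\end{equation*}
and substituting (\ref{cos-ET}) gives the same expression with $\csc$ replaced by $\sec$ and the signs $(-1)^k$ removed.

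Finally I would rewrite the scalar prefactor. Using the reflection formula $\Gamma(s)\Gamma(1-s)=\pi/\sin(\pi s)$ together with the double-angle formula $\sin(\pi s)=2\sin(\pi s/2)\cos(\pi s/2)$, one checks that
\begin{equation*}
\frac{\pi^s}{2\Gamma(s)}\csc\left(\frac{\pi s}{2}\right)=\frac{\Gamma(1-s)}{\pi^{1-s}}\cos\left(\frac{\pi s}{2}\right),\qquad\frac{\pi^s}{2\Gamma(s)}\sec\left(\frac{\pi s}{2}\right)=\frac{\Gamma(1-s)}{\pi^{1-s}}\sin\left(\frac{\pi s}{2}\right),
\end{equation*}
and substituting these into the displays above yields (\ref{m-sin-ET}) and (\ref{m-cos-ET}).

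The argument is essentially mechanical, so I do not expect a genuine obstacle; the only points requiring care are the bookkeeping in the reindexing of the power-reduction formulas (so that the binomial coefficient appears exactly as $\binom{2n-1}{n-k-1}$ and the alternating sign as $(-1)^k$) and the value $s=0$, where the factor $\csc(\pi s/2)/\Gamma(s)$ in the intermediate step has a removable singularity; there the formulas (\ref{m-sin-ET}) and (\ref{m-cos-ET}) are to be read via their right-hand sides, which are manifestly regular for $s\le0$ and agree with the limiting values of the left-hand sides.
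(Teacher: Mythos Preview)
Your proposal is correct and follows essentially the same route as the paper: both use the classical power-reduction identity (\ref{tri-int-sin}) to expand the odd power as a finite linear combination of $\sin((2k+1)\pi x)$ (respectively $\cos((2k+1)\pi x)$), and then apply the Fourier-coefficient formulas (\ref{sin-ET}) and (\ref{cos-ET}) termwise. Your write-up is in fact more detailed than the paper's, since you spell out the reflection/double-angle step that converts $\pi^s/(2\Gamma(s))\csc(\pi s/2)$ into $\Gamma(1-s)\pi^{s-1}\cos(\pi s/2)$ and address the removable singularity at $s=0$, neither of which the paper makes explicit.
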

\begin{proof}
The proofs follow from the similar arguments with \cite[Examples 11.1 and 11.2]{EM}, which is based on (\ref{sin-ET}) and (\ref{cos-ET}), and the integral representation
\begin{equation}\label{tri-int-sin}
\sin^{2n-1}(\pi x)=\frac1{2^{2n-2}}\sum_{k=0}^{n-1}(-1)^k\binom{2n-1}{n-k-1}\sin((2k+1)\pi x)
\end{equation}
by Kogan in \cite[p. 31, 1.320]{GR}.
\end{proof}

As in Espinosa and Moll~\cite{EM}, we may evaluate the integrals using the Fourier expansions of functions.
The following result about this idea will become the basic tool for the proofs of the results in this paper.

\begin{theorem}\label{thm-fu-co}
Let $f(w,x)$ be defined for $x\in[0,1]$ and a parameter $w.$ Let
\begin{equation}\label{thm-f-e}
f(w,x)=\sum_{n=0}^\infty\biggl(a_n(w)\cos((2n+1)\pi x)+b_n(w)\sin((2n+1)\pi x)\biggl)
\end{equation}
be its Fourier expansion, so that
\begin{equation}\label{thm-f-c 1}
a_n(w)=2\int_0^1 f(w,x)\cos((2n+1)\pi x)dx,
\end{equation}
\begin{equation}\label{thm-f-c 2}
b_n(w)=2\int_0^1 f(w,x)\sin((2n+1)\pi x)dx.
\end{equation}
Then, for $s\in\mathbb R_0^-,$ we have
\begin{equation}\label{thm-f 1}
\int_0^1 f(w,x)\zeta_E(s,x)dx=\frac{\Gamma(1-s)}{\pi^{1-s}}
\left(\sin\left(\frac{\pi s}{2}\right)C(s,w)
+\cos\left(\frac{\pi s}{2}\right)S(s,w) \right)
\end{equation}
and
\begin{equation}\label{thm-f 2}
\int_0^1 f(w,x)\zeta_E(s,1-x)dx=\frac{\Gamma(1-s)}{\pi^{1-s}}
\left(\cos\left(\frac{\pi s}{2}\right)S(s,w)-\sin\left(\frac{\pi s}{2}\right)C(s,w) \right),
\end{equation}
where
$$
C(s,w)=\sum_{n=0}^\infty\frac{a_n(w)}{(2n+1)^{1-s}}\quad\text{and}\quad S(s,w)=\sum_{n=0}^\infty\frac{b_n(w)}{(2n+1)^{1-s}}.
$$
\end{theorem}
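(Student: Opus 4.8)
The plan is to substitute the Fourier expansion (\ref{thm-f-e}) of $f(w,x)$ into the integral on the left of (\ref{thm-f 1}) and to integrate term by term against $\zeta_E(s,x)$. This is legitimate because the functions $\cos((2n+1)\pi x)$ and $\sin((2n+1)\pi x)$, $n\ge 0$, form an orthogonal system on $[0,1]$, namely
\[
\int_0^1\cos((2m+1)\pi x)\cos((2n+1)\pi x)\,dx=\int_0^1\sin((2m+1)\pi x)\sin((2n+1)\pi x)\,dx=\frac12\delta_{mn},\qquad \int_0^1\sin((2m+1)\pi x)\cos((2n+1)\pi x)\,dx=0,
\]
and because for $s\in\mathbb R_0^-$ one has $\zeta_E(s,\cdot)\in L^2[0,1]$: by Proposition~\ref{lem1} its Fourier coefficients in this system are $O\bigl((2n+1)^{s-1}\bigr)$, hence square-summable (and for $s=0$ it is just the constant $\tfrac12 E_0(x)=\tfrac12$, by (\ref{EZ-Eu})). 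Granting that the series (\ref{thm-f-e}) converges to $f(w,\cdot)$ in $L^2[0,1]$, Parseval's identity gives
\[
\int_0^1 f(w,x)\zeta_E(s,x)\,dx=\sum_{n=0}^\infty\Bigl(a_n(w)\int_0^1\cos((2n+1)\pi x)\zeta_E(s,x)\,dx+b_n(w)\int_0^1\sin((2n+1)\pi x)\zeta_E(s,x)\,dx\Bigr),
\]
so the two Fourier-coefficient formulas (\ref{sin-ET}) and (\ref{cos-ET}) can be inserted directly.

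Next I would rewrite the elementary prefactors occurring in Proposition~\ref{lem1}. From the reflection formula $\Gamma(s)\Gamma(1-s)=\pi/\sin(\pi s)$ together with $\sin(\pi s)=2\sin\bigl(\tfrac{\pi s}{2}\bigr)\cos\bigl(\tfrac{\pi s}{2}\bigr)$ one checks that
\[
\frac{\pi^s(2n+1)^{s-1}}{2\Gamma(s)}\sec\Bigl(\frac{\pi s}{2}\Bigr)=\frac{\Gamma(1-s)}{\pi^{1-s}}\cdot\frac{\sin(\pi s/2)}{(2n+1)^{1-s}},\qquad \frac{\pi^s(2n+1)^{s-1}}{2\Gamma(s)}\csc\Bigl(\frac{\pi s}{2}\Bigr)=\frac{\Gamma(1-s)}{\pi^{1-s}}\cdot\frac{\cos(\pi s/2)}{(2n+1)^{1-s}}.
\]
Substituting these into the previous display, pulling the common factor $\Gamma(1-s)/\pi^{1-s}$ outside the sum, and recognising $\sum_n a_n(w)(2n+1)^{s-1}=C(s,w)$ and $\sum_n b_n(w)(2n+1)^{s-1}=S(s,w)$ produces exactly (\ref{thm-f 1}). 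For $s<0$ every quantity here is plainly finite; the endpoint $s=0$ is then covered either by continuity in $s$ of both sides, or by reading (\ref{sin-ET}) at $s=0$ in the limiting sense, the pole of $\csc(\pi s/2)$ cancelling the zero of $1/\Gamma(s)$.

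For (\ref{thm-f 2}) I would apply the substitution $x\mapsto 1-x$, which rewrites $\int_0^1 f(w,x)\zeta_E(s,1-x)\,dx$ as $\int_0^1 f(w,1-x)\zeta_E(s,x)\,dx$, and then use $\cos((2n+1)\pi(1-x))=-\cos((2n+1)\pi x)$ and $\sin((2n+1)\pi(1-x))=\sin((2n+1)\pi x)$. Hence $f(w,1-x)$ has Fourier expansion $\sum_n\bigl(-a_n(w)\cos((2n+1)\pi x)+b_n(w)\sin((2n+1)\pi x)\bigr)$, and applying (\ref{thm-f 1}) to it — that is, with $C(s,w)$ replaced by $-C(s,w)$ and $S(s,w)$ unchanged — yields (\ref{thm-f 2}).

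The step I expect to be the crux is the justification of the termwise integration: for $s$ close to $0$ the Fourier series (\ref{F-Exp}) of $\zeta_E(s,x)$ converges only conditionally, so it cannot be dominated naively; passing instead through the $L^2$ orthogonality above circumvents this, at the price of needing both $\zeta_E(s,\cdot)\in L^2[0,1]$ (furnished by Proposition~\ref{lem1}) and $L^2$-convergence of the expansion of $f$, which should be recorded explicitly as a hypothesis on $f$. Everything else reduces to the trigonometric identities above together with standard properties of the Gamma function.
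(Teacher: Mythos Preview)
Your proposal is correct and follows essentially the same approach as the paper: multiply the Fourier expansion (\ref{thm-f-e}) by $\zeta_E(s,x)$, integrate term by term, and invoke (\ref{sin-ET}) and (\ref{cos-ET}). The paper's proof is a one-line sketch referring to \cite[Theorem 2.1]{EM}; your version supplies the Gamma-reflection rewriting and the $L^2$ justification explicitly, and your derivation of (\ref{thm-f 2}) via the substitution $x\mapsto 1-x$ is a clean variant of what the paper leaves implicit.
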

\begin{proof}
The proof follows from the similar argument with \cite[Theorem 2.1]{EM}, by multiplying the formula (\ref{thm-f-e}) by $\zeta_E(s,x)$ and integatationg
both sides from 0 to 1, then applying (\ref{sin-ET}) and (\ref{cos-ET}).
\end{proof}

\section{Integral representations of Hurwitz-type Euler zeta functions}\label{p-h-e-zeta}

In this section, we shall evaluate integrals with integrands consisting of products of Hurwitz-type Euler zeta functions.
These will imply some integral representations for the function and classical relations for the integral of products of two Euler polynomials.

\begin{theorem}\label{lem-fu-co-2}
Let $s,s'\in\mathbb R_0^-.$ Then we have
\begin{equation}\label{thm-re}
\int_0^1\zeta_E(s',x)\zeta_E(s,x)dx=\frac{2\Gamma(1-s)\Gamma(1-s')}{\pi^{2-s-s'}}\lambda(2-s-s')\cos\left(\frac{\pi(s-s')}{2}\right).
\end{equation}
Similarly, we have
\begin{equation}\label{thm-re-2}
\int_0^1\zeta_E(s',x)\zeta_E(s,1-x)dx=\frac{2\Gamma(1-s)\Gamma(1-s')}{\pi^{2-s-s'}}\lambda(2-s-s')\cos\left(\frac{\pi(s+s')}{2}\right).
\end{equation}
\end{theorem}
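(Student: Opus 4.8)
The plan is to deduce both identities directly from Theorem \ref{thm-fu-co}, applied with parameter $w=s'$ and with $f(s',x)=\zeta_E(s',x)$, reading off the Fourier coefficients from the expansion (\ref{F-Exp}). Since $s'\in\mathbb R_0^-$ forces ${\rm Re}(s')<1$, the expansion (\ref{F-Exp}) is valid on $0<x\le 1$, and using the addition formula
$$\sin\left((2n+1)\pi x+\frac{\pi s'}{2}\right)=\cos\left(\frac{\pi s'}{2}\right)\sin((2n+1)\pi x)+\sin\left(\frac{\pi s'}{2}\right)\cos((2n+1)\pi x)$$
it exhibits $\zeta_E(s',x)$ in exactly the odd-frequency form (\ref{thm-f-e}), with
$$a_n(s')=\frac{2\Gamma(1-s')}{\pi^{1-s'}}\cdot\frac{\sin(\pi s'/2)}{(2n+1)^{1-s'}},\qquad b_n(s')=\frac{2\Gamma(1-s')}{\pi^{1-s'}}\cdot\frac{\cos(\pi s'/2)}{(2n+1)^{1-s'}}.$$

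Next I would form the auxiliary series $C(s,s')$ and $S(s,s')$ of Theorem \ref{thm-fu-co}. Because $s,s'\in\mathbb R_0^-$ we have $2-s-s'\ge 2>1$, so the resulting series converge absolutely and are, by the definition (\ref{lam-def}),
$$C(s,s')=\sum_{n=0}^\infty\frac{a_n(s')}{(2n+1)^{1-s}}=\frac{2\Gamma(1-s')}{\pi^{1-s'}}\sin\left(\frac{\pi s'}{2}\right)\lambda(2-s-s'),\qquad S(s,s')=\frac{2\Gamma(1-s')}{\pi^{1-s'}}\cos\left(\frac{\pi s'}{2}\right)\lambda(2-s-s').$$

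Finally I would substitute these into (\ref{thm-f 1}) and (\ref{thm-f 2}). In the first case the bracketed trigonometric factor collapses, via $\sin(\pi s/2)\sin(\pi s'/2)+\cos(\pi s/2)\cos(\pi s'/2)=\cos\!\left(\frac{\pi(s-s')}{2}\right)$, to give (\ref{thm-re}); in the second case $\cos(\pi s/2)\cos(\pi s'/2)-\sin(\pi s/2)\sin(\pi s'/2)=\cos\!\left(\frac{\pi(s+s')}{2}\right)$ gives (\ref{thm-re-2}). There is no genuine obstacle once Theorem \ref{thm-fu-co} is in hand; the only points deserving a word are that $\zeta_E(s',\cdot)$ really does possess a Fourier expansion of the special form (\ref{thm-f-e}) on $(0,1]$ — which is precisely what (\ref{F-Exp}) furnishes — and that the termwise integration packaged into Theorem \ref{thm-fu-co} is legitimate here, which it is because pairing $(2n+1)^{s'-1}$ against $(2n+1)^{s-1}$ yields a series of exponent $2-s-s'>1$. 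As a consistency check, both right-hand sides are symmetric in $s$ and $s'$, matching the symmetry of the left-hand integrals under $x\mapsto 1-x$ together with relabeling of $s$ and $s'$.
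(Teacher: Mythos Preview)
Your proposal is correct and follows essentially the same route as the paper: apply Theorem \ref{thm-fu-co} with $f(s',x)=\zeta_E(s',x)$, read off the Fourier coefficients $a_n(s')$ and $b_n(s')$ from (\ref{F-Exp}), sum to get $C$ and $S$ in terms of $\lambda(2-s-s')$, and then collapse the trigonometric combinations via the cosine addition formulas. Your added remarks on the validity of (\ref{F-Exp}) for $s'\in\mathbb R_0^-$ and on the absolute convergence of the $\lambda$-series (since $2-s-s'\ge 2$) are sound and slightly more explicit than the paper's own argument.
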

\begin{proof}
In view of (\ref{thm-fu-co}), (\ref{thm-f-c 1}) and (\ref{thm-f-c 2}),
the expansion (\ref{F-Exp}) shows that the coefficients of $\zeta_E(s',x)$ are given by
\begin{equation}\label{coeffi}
\begin{aligned}
a_n(s')&=\frac{2\Gamma(1-s')\sin\left(\frac{\pi s'}2\right)}{\pi^{1-s'}}\frac1{(2n+1)^{1-s'}}, \\
b_n(s')&=\frac{2\Gamma(1-s')\cos\left(\frac{\pi s'}2\right)}{\pi^{1-s'}}\frac1{(2n+1)^{1-s'}}.
\end{aligned}
\end{equation}
From (\ref{lam-def}), (\ref{thm-f 1}) and (\ref{coeffi}), we obtain
\begin{equation}\label{coeffi-1}
\begin{aligned}
\int_0^1\zeta_E(s',x)\zeta_E(s,x)dx
&=\frac{2\Gamma(1-s)\Gamma(1-s')}{\pi^{2-s-s'}}
\biggl(\sin\left(\frac{\pi s}2\right)\sin\left(\frac{\pi s'}2\right) \\
&\quad+\cos\left(\frac{\pi s}2\right)\cos\left(\frac{\pi s'}2\right) \biggl)
\sum_{n=0}^\infty\frac{1}{(2n+1)^{2-s-s'}} \\
&=\frac{2\Gamma(1-s)\Gamma(1-s')}{\pi^{2-s-s'}}\cos\left(\frac{\pi(s-s')}{2}\right)\lambda(2-s-s'),
\end{aligned}
\end{equation}
which gives (\ref{thm-re}). The proof of (\ref{thm-re-2}) is similar.
\end{proof}

\begin{corollary}
Let $s,s'\in\mathbb R_0^-$ and
$$\delta_2(s)=\frac{1-2^{s+1}}{1-2^s}, \quad s\neq0.$$
Then we have
\begin{equation}\label{co-re}
\begin{aligned}
\int_0^1\zeta_E(s',x)\zeta_E(s,x)dx&=\delta_2(1-s-s')\frac{\cos\left(\frac{\pi(s-s')}{2}\right)}{\cos\left(\frac{\pi(s+s')}{2}\right)}
B(1-s,1-s')\\
&\quad\times\lambda(s+s'-1).
\end{aligned}
\end{equation}
Similarly, we have
\begin{equation}\label{co-re-2}
\begin{aligned}
\int_0^1\zeta_E(s',x)\zeta_E(s,1-x)dx&=\delta_2(1-s-s')B(1-s,1-s') \\
&\quad\times\lambda(s+s'-1).
\end{aligned}
\end{equation}
Here $B(s,s')$ is the beta function,
\begin{equation}\label{be-def}
B(s,s')=\frac{\Gamma(s)\Gamma(s')}{\Gamma(s+s')},
\end{equation}
defined for $s,s'\in\mathbb C$ with ${\rm Re}(s),{\rm Re}(s')>1,$ see \cite[(1.36)]{WY}.
\end{corollary}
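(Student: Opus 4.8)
The plan is to obtain both identities from Theorem~\ref{lem-fu-co-2} by rewriting the right-hand sides of (\ref{thm-re}) and (\ref{thm-re-2}) with three classical facts: the definition (\ref{be-def}) of the beta function, in the form $\Gamma(1-s)\Gamma(1-s')=B(1-s,1-s')\Gamma(2-s-s')$; Euler's reflection formula $\Gamma(z)\Gamma(1-z)=\pi/\sin(\pi z)$; and the functional equation of the Riemann zeta function $\zeta(w)=2^{w}\pi^{w-1}\sin\left(\frac{\pi w}{2}\right)\Gamma(1-w)\zeta(1-w)$ (equivalently, the specialization of (\ref{H-F-Exp}) at $x=1$), together with $\lambda(w)=(1-2^{-w})\zeta(w)$ from (\ref{lam-def}). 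Since $s,s'\in\mathbb{R}_0^{-}$, the argument $w:=2-s-s'$ satisfies $w\ge 2$, so $\lambda(2-s-s')$ is given by the convergent series in (\ref{lam-def}) and all of these relations apply without restriction.

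The first step is to convert the $\lambda$-factor in (\ref{thm-re}). Inserting the $\zeta$ functional equation at $w=2-s-s'$ into $\lambda(2-s-s')$ and using $\lambda(w)=(1-2^{-w})\zeta(w)$ for both $w=2-s-s'$ and $w=s+s'-1$, the powers of $2$ collapse: one has $(1-2^{s+s'-2})\cdot 2^{2-s-s'}=2^{2-s-s'}-1$, the surviving denominator is $1-2^{1-s-s'}$, and the ratio is exactly $-\delta_2(1-s-s')$; at the same time $\sin\left(\frac{\pi(2-s-s')}{2}\right)=\sin\left(\frac{\pi(s+s')}{2}\right)$. Hence
\begin{equation*}
\lambda(2-s-s')=-\delta_2(1-s-s')\,\pi^{1-s-s'}\sin\left(\frac{\pi(s+s')}{2}\right)\Gamma(s+s'-1)\,\lambda(s+s'-1).
\end{equation*}
Substituting this together with $\Gamma(1-s)\Gamma(1-s')=B(1-s,1-s')\Gamma(2-s-s')$ into (\ref{thm-re}), the powers of $\pi$ reduce to $\pi^{1-s-s'}/\pi^{2-s-s'}=\pi^{-1}$, and there remains the gamma product $\Gamma(2-s-s')\Gamma(s+s'-1)$. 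By the reflection formula this equals $\pi/\sin(\pi(s+s'-1))=-\pi/\sin(\pi(s+s'))$, and the double-angle identity $\sin(\pi(s+s'))=2\sin\left(\frac{\pi(s+s')}{2}\right)\cos\left(\frac{\pi(s+s')}{2}\right)$ rewrites it as $-\pi/\bigl(2\sin\left(\frac{\pi(s+s')}{2}\right)\cos\left(\frac{\pi(s+s')}{2}\right)\bigr)$. After this substitution the factors $\sin\left(\frac{\pi(s+s')}{2}\right)$ cancel, the constants $\frac{2}{\pi}\cdot\frac{\pi}{2}$ cancel, and the two minus signs cancel, which leaves precisely (\ref{co-re}). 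The derivation of (\ref{co-re-2}) is verbatim the same, starting from (\ref{thm-re-2}); the only change is that the factor $\cos\left(\frac{\pi(s+s')}{2}\right)$ already appearing in (\ref{thm-re-2}) cancels the $\cos\left(\frac{\pi(s+s')}{2}\right)$ produced in the denominator, so that no trigonometric factor survives.

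The step I expect to require the most care — and would check twice — is the bookkeeping of the powers of $2$ and of the signs, which must reproduce exactly $\delta_2(1-s-s')=(1-2^{2-s-s'})/(1-2^{1-s-s'})$ with the correct overall sign; it is easy to land on $\delta_2$ of the wrong argument or to pick up a spurious factor $-1$. I would also note that (\ref{co-re}) is to be read at parameters with $\cos\left(\frac{\pi(s+s')}{2}\right)\ne 0$ (that is, $s+s'$ not an odd integer); at the excluded values the numerator $\cos\left(\frac{\pi(s-s')}{2}\right)$ vanishes as well, the integral is given unambiguously by (\ref{thm-re}), and (\ref{co-re}) then holds there by continuity.
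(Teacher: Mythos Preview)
Your proof is correct and follows essentially the same route as the paper: derive a functional-equation identity converting $\lambda(2-s-s')$ to $\lambda(s+s'-1)$ (the paper records this as (\ref{la-ft}), with $\Gamma(2-s-s')$ in the denominator rather than your equivalent $\Gamma(s+s'-1)$ in the numerator), then substitute into (\ref{thm-re}) and (\ref{thm-re-2}) and repackage the gamma factors as $B(1-s,1-s')$.

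One small slip in your closing remark: at the excluded points $s+s'=-(2m-1)$ with $m\ge 1$, it is not $\cos\bigl(\frac{\pi(s-s')}{2}\bigr)$ that is forced to vanish (take $s=s'=-\tfrac12$), but rather $\lambda(s+s'-1)=\lambda(-2m)=(1-2^{2m})\zeta(-2m)=0$, coming from the trivial zeros of $\zeta$. This is what resolves the $0/0$ and makes your continuity claim go through.
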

\begin{proof}
The functional equation of Riemann's zeta function is well-known, and it is given by (see \cite[(1.35)]{EM})
\begin{equation}\label{ri-ze-ft}
\zeta(1-s)=2\cos\left(\frac{\pi s}{2}\right)\frac{\Gamma(s)}{(2\pi)^s}\zeta(s).
\end{equation}
From (\ref{lam-def}) and (\ref{ri-ze-ft}), we have
\begin{equation}\label{la-ft}
\lambda(2-s-s')=-\frac{\delta_2(1-s-s')\pi^{2-s-s'}}{2\Gamma(2-s-s')\cos\left(\frac{\pi (2-s-s')}{2}\right)}\lambda(s+s'-1).
\end{equation}
Finally, using (\ref{thm-re}) and (\ref{la-ft}), we obtain (\ref{co-re}).
The proof of (\ref{co-re-2}) is similar.
\end{proof}

\begin{corollary}\label{pro-fu-co-1}
Let $s\in\mathbb R_0^-.$ Then we have
\begin{equation}\label{ex-1-1}
\int_0^1\zeta_E^2(s,x)dx=2\Gamma^2(1-s)\pi^{2s-2}\lambda(2-2s)
\end{equation}
and
\begin{equation}\label{ex-1-2}
\int_0^1\zeta_E(s,x)\zeta_E(s,1-x)dx=2\Gamma^2(1-s)\pi^{2s-2}\lambda(2-2s)\cos(\pi s).
\end{equation}
\end{corollary}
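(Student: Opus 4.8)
The plan is to obtain both identities as the diagonal case $s'=s$ of Theorem \ref{lem-fu-co-2}. First I would set $s'=s$ in equation (\ref{thm-re}): the argument of the cosine becomes $\tfrac{\pi(s-s)}{2}=0$, so $\cos\left(\tfrac{\pi(s-s')}{2}\right)=1$, the exponent $2-s-s'$ collapses to $2-2s$, and the product $\Gamma(1-s)\Gamma(1-s')$ becomes $\Gamma^2(1-s)$. This immediately yields
\[
\int_0^1\zeta_E^2(s,x)\,dx=\frac{2\Gamma^2(1-s)}{\pi^{2-2s}}\lambda(2-2s)=2\Gamma^2(1-s)\pi^{2s-2}\lambda(2-2s),
\]
which is (\ref{ex-1-1}). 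Here one should note that $\zeta_E(s,x)$ is real-valued for $x\in(0,1]$ and $s\in\mathbb R_0^-$, so $\zeta_E^2(s,x)=\zeta_E(s,x)\zeta_E(s,x)$ and the diagonal substitution in Theorem \ref{lem-fu-co-2} is legitimate (alternatively, one can take a limit $s'\to s$ in (\ref{thm-re}), since all factors are continuous in $s'$ on $\mathbb R_0^-$).

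For the second identity, I would likewise put $s'=s$ in equation (\ref{thm-re-2}). Now the cosine argument is $\tfrac{\pi(s+s')}{2}=\pi s$, so $\cos\left(\tfrac{\pi(s+s')}{2}\right)=\cos(\pi s)$, and the remaining factors simplify exactly as before, giving
\[
\int_0^1\zeta_E(s,x)\zeta_E(s,1-x)\,dx=\frac{2\Gamma^2(1-s)}{\pi^{2-2s}}\lambda(2-2s)\cos(\pi s)=2\Gamma^2(1-s)\pi^{2s-2}\lambda(2-2s)\cos(\pi s),
\]
which is (\ref{ex-1-2}).

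Since this is a direct specialization, there is no real obstacle; the only thing to be careful about is the justification of the diagonal case, which I would handle either by the reality remark above or by a continuity/limit argument in the parameter $s'$ (both factors $\cos(\pi(s\mp s')/2)$, $\Gamma(1-s')$, $\pi^{s'}$, and $\lambda(2-s-s')$ being continuous at $s'=s$ for $s\in\mathbb R_0^-$, where $2-2s\ge 2>1$ so $\lambda$ is given by its convergent series). As a consistency check one may also compare (\ref{ex-1-1}) with the Beta-function form (\ref{co-re}) upon setting $s'=s$ there: the ratio $\cos(\pi(s-s')/2)/\cos(\pi(s+s')/2)$ becomes $1/\cos(\pi s)$, and the two expressions agree via the functional equation (\ref{la-ft}); similarly (\ref{ex-1-2}) matches (\ref{co-re-2}) with $s'=s$, which provides a useful internal verification of the constants.
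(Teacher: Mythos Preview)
Your proof is correct and follows exactly the paper's approach: the authors' proof consists of the single sentence ``Put $s=s'$ in (\ref{thm-re}) and (\ref{thm-re-2}), we get our results.'' Your additional remarks on the legitimacy of the diagonal substitution and the consistency check via (\ref{co-re})--(\ref{co-re-2}) are sound but go beyond what the paper supplies.
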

\begin{proof}
Put $s=s'$ in (\ref{thm-re}) and (\ref{thm-re-2}), we get our results.
\end{proof}

\begin{corollary}\label{china-1}
Let $m\in\mathbb N.$ Then we have
\begin{equation}\label{ex-3}
\int_0^1\zeta_E^2\left(-m+\frac12,x\right)dx=2\left(\frac{(2m)!}{2^{2m}m!}\right)^2\frac{\lambda(2m+1)}{\pi^{2m}}.
\end{equation}
\end{corollary}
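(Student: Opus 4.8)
The plan is to specialize the general product formula (\ref{ex-1-1}) from Corollary \ref{pro-fu-co-1} at the half-integer argument $s=-m+\tfrac12$. Setting $s=-m+\tfrac12$ gives $2s-2 = -2m-1$ and $2-2s = 2m+1$, so the right-hand side of (\ref{ex-1-1}) becomes
$$
\int_0^1\zeta_E^2\left(-m+\tfrac12,x\right)dx = 2\,\Gamma^2\!\left(1-\left(-m+\tfrac12\right)\right)\pi^{-2m-1}\lambda(2m+1)
= 2\,\Gamma^2\!\left(m+\tfrac12\right)\frac{\lambda(2m+1)}{\pi^{2m+1}}.
$$
So the entire content of Corollary \ref{china-1} reduces to the elementary identity
$$
\Gamma^2\!\left(m+\tfrac12\right) = \pi\left(\frac{(2m)!}{2^{2m}m!}\right)^2,
$$
which, after taking square roots, is the classical duplication-type evaluation $\Gamma\!\left(m+\tfrac12\right)=\dfrac{(2m)!}{4^m\,m!}\sqrt{\pi}$.

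First I would check that the hypothesis $s\in\mathbb R_0^-$ needed for Corollary \ref{pro-fu-co-1} is met: indeed $-m+\tfrac12 \le -\tfrac12 < 0$ for every $m\in\mathbb N$, so (\ref{ex-1-1}) applies verbatim. Next I would record the value of $\Gamma(m+\tfrac12)$. The cleanest route is induction on $m$ starting from $\Gamma(\tfrac12)=\sqrt\pi$ and using the functional equation $\Gamma(z+1)=z\Gamma(z)$, which gives $\Gamma(m+\tfrac12)=(m-\tfrac12)(m-\tfrac32)\cdots\tfrac12\cdot\sqrt\pi = \dfrac{(2m-1)!!}{2^m}\sqrt\pi$; then rewriting the double factorial as $(2m-1)!! = \dfrac{(2m)!}{2^m m!}$ yields $\Gamma(m+\tfrac12) = \dfrac{(2m)!}{4^m m!}\sqrt\pi$. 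Squaring and substituting into the specialized form of (\ref{ex-1-1}) produces exactly the claimed right-hand side $2\left(\dfrac{(2m)!}{2^{2m}m!}\right)^2\dfrac{\lambda(2m+1)}{\pi^{2m}}$, since $\pi\cdot\pi^{-2m-1}=\pi^{-2m}$.

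There is essentially no obstacle here: this is a direct corollary, and the only thing to be careful about is the bookkeeping of the exponent of $\pi$ (the $\pi^{2s-2}$ in (\ref{ex-1-1}) becomes $\pi^{-2m-1}$, and one factor of $\pi$ from $\Gamma^2(m+\tfrac12)$ cancels it down to $\pi^{-2m}$) and the algebraic simplification $\dfrac{(2m-1)!!}{2^m}=\dfrac{(2m)!}{2^{2m}m!}$. I would present the argument in two lines: substitute $s=-m+\tfrac12$ into (\ref{ex-1-1}), then invoke the standard value of $\Gamma(m+\tfrac12)$ and simplify.
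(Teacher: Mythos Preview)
Your proposal is correct and follows exactly the same approach as the paper's proof: substitute $s=-m+\tfrac12$ into (\ref{ex-1-1}) and invoke the standard value $\Gamma\!\left(m+\tfrac12\right)=\dfrac{\sqrt{\pi}\,(2m)!}{2^{2m}m!}$. The only difference is that you spell out the derivation of $\Gamma(m+\tfrac12)$ via the functional equation and double factorial, whereas the paper simply cites it.
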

\begin{proof}
Setting $s=-m+\frac 12$ in (\ref{ex-1-1}), then by
$$\Gamma\left(m+\frac12\right)=\frac{\sqrt \pi (2m)!}{2^{2m}m!}\quad\text{(see \cite[p.~167]{EM})},$$
we get our result.
\end{proof}

\begin{corollary}\label{china-2}
Let $s\in\mathbb R_0^-$ and $m\in\mathbb N.$ Then we have
\begin{equation}\label{ex-4}
\int_0^1 E_{m-1}(x)\zeta_E(s,x)dx=(-1)^{m+1}2\delta_2(m-s)\frac{(m-1)!\lambda(s-m)}{(1-s)_m},
\end{equation}
where $(s)_k=s(s+1)\cdots(s+k-1)$ is the Pochhammer symbol.
\end{corollary}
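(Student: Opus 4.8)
The plan is to reduce this to the product formula for two Hurwitz-type Euler zeta functions that was just established. The key observation is that the Euler polynomial $E_{m-1}(x)$ is, up to the factor $2$, a special value of $\zeta_E$ at a nonpositive integer: applying \eqref{EZ-Eu} with $m$ replaced by $m-1$ (legitimate since $m\in\mathbb N$ forces $m-1\in\mathbb N_0$) gives $E_{m-1}(x)=2\zeta_E(1-m,x)$. Hence
$$\int_0^1 E_{m-1}(x)\zeta_E(s,x)\,dx=2\int_0^1\zeta_E(1-m,x)\zeta_E(s,x)\,dx,$$
and since $1-m\le 0$ lies in $\mathbb R_0^-$, the integral on the right is exactly the one evaluated in \eqref{co-re} with $s'=1-m$.

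Next I would substitute $s'=1-m$ into \eqref{co-re} and do the bookkeeping: $1-s-s'=m-s$, so $\delta_2(1-s-s')=\delta_2(m-s)$, which is well defined because $m-s\ge 1\ne 0$; $1-s'=m$; and $s+s'-1=s-m$, so $\lambda(s+s'-1)=\lambda(s-m)$, where $s-m\le -1$ avoids the pole of $\lambda$ at $1$. This yields
$$\int_0^1\zeta_E(1-m,x)\zeta_E(s,x)\,dx=\delta_2(m-s)\,\frac{\cos\bigl(\tfrac{\pi(s-1+m)}{2}\bigr)}{\cos\bigl(\tfrac{\pi(s+1-m)}{2}\bigr)}\,B(1-s,m)\,\lambda(s-m).$$

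The two remaining simplifications are routine. For the trigonometric ratio, write $\tfrac{\pi(s-1+m)}{2}=\tfrac{\pi(s+1-m)}{2}+\pi(m-1)$ and use $\cos(\theta+\pi(m-1))=(-1)^{m-1}\cos\theta$ (valid since $m-1\in\mathbb Z$, so $\sin(\pi(m-1))=0$); the ratio is therefore $(-1)^{m-1}=(-1)^{m+1}$. For the beta factor, \eqref{be-def} gives $B(1-s,m)=\Gamma(1-s)\Gamma(m)/\Gamma(1-s+m)=(m-1)!/(1-s)_m$, using $\Gamma(1-s+m)/\Gamma(1-s)=(1-s)(2-s)\cdots(m-s)=(1-s)_m$ and $\Gamma(m)=(m-1)!$; as elsewhere in the paper, \eqref{be-def} is read in its analytically continued form, which also removes the $\mathrm{Re}>1$ restriction. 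Multiplying the resulting expression by $2$ gives exactly \eqref{ex-4}.

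There is no genuine obstacle here; the only points that need care are checking that every quantity appearing ($\delta_2(m-s)$, $\lambda(s-m)$, and the cosine ratio) is well defined for $s\in\mathbb R_0^-$ and $m\in\mathbb N$ — the inequalities $m-s\ge 1$ and $s-m\le -1$ take care of this — and tracking the sign $(-1)^{m+1}$ correctly through the cosine reduction. As an alternative one could bypass \eqref{co-re} entirely, starting from Theorem \ref{lem-fu-co-2}, formula \eqref{thm-re}, with $s'=1-m$ and then invoking the lambda functional equation \eqref{la-ft}; this produces the same final computation, with the parity factor emerging from the quotient of $\cos\bigl(\tfrac{\pi(1-s+m)}{2}\bigr)$ and $\cos\bigl(\tfrac{\pi(s-1+m)}{2}\bigr)$ in the same way.
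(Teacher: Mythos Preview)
Your proof is correct and follows exactly the same route as the paper: substitute $s'=1-m$ into \eqref{co-re}, use \eqref{EZ-Eu} to identify $2\zeta_E(1-m,x)$ with $E_{m-1}(x)$, reduce the cosine ratio to $(-1)^{m+1}$, and simplify $B(1-s,m)=(m-1)!/(1-s)_m$. The additional well-definedness checks you include are a nice touch, but the argument is otherwise identical to the paper's.
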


\begin{remark}
The case $m=1$ in (\ref{ex-4}) implies (\ref{e-1}) from (\ref{lam-def}) and (\ref{ri-ze-ft}).
\end{remark}

\begin{proof}[Proof of the Corollary \ref{china-2}]
Let $s'=1-m$ in (\ref{co-re}), we have
$$\begin{aligned}
\int_0^1 \zeta_E(1-m,x)\zeta_E(s,x)dx&=\delta_2(m-s)\frac{\cos\left(\frac{\pi(s-1+m)}{2}\right)}
{\cos\left(\frac{\pi(s+1-m)}{2}\right)}B(1-s,m) \\
&\quad\times\lambda(s-m).
\end{aligned}$$
The result then follows from (\ref{EZ-Eu}), since
$$\frac{\cos\left(\frac{\pi(s-1+m)}{2}\right)}
{\cos\left(\frac{\pi(s+1-m)}{2}\right)}=(-1)^{m+1}$$
and $B(1-s,m)=(m-1)!/(1-s)_m.$
\end{proof}

\begin{lemma}\label{co-ezeta}
For $m\in\mathbb N,$ we have
\begin{equation}\label{lam-1-m}
\lambda(1-m)=(-1)^{m+1}\frac{1}{2\delta_2(m-1)}E_{m-1}(0).
\end{equation}
\end{lemma}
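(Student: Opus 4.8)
The plan is to establish the closed form $\lambda(1-m)=(-1)^{m+1}\frac{1}{2\delta_2(m-1)}E_{m-1}(0)$ by combining the functional equation for $\lambda$ that was already derived in the excerpt with the known special value $\zeta_E(-m,x)=\frac12E_m(x)$. The cleanest route is to reduce everything to a single evaluation of $\zeta_E$ at a negative integer, avoiding any new analytic input.

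First I would specialize the relation \eqref{la-ft}, namely
$$\lambda(2-s-s')=-\frac{\delta_2(1-s-s')\pi^{2-s-s'}}{2\Gamma(2-s-s')\cos\left(\frac{\pi(2-s-s')}{2}\right)}\lambda(s+s'-1),$$
by choosing $s+s'$ so that $2-s-s'=2m$ (equivalently $s+s'-1=1-2m$), but in fact it is simpler to go the other direction: I want $\lambda$ evaluated at $1-m$, so I would instead invoke the evaluation \eqref{e-1} of $\int_0^1\zeta_E(s,x)\,dx$, or better yet apply Corollary~\ref{china-2} in the degenerate form $m=1$, which the paper already notes reproduces \eqref{e-1}. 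Concretely, set $s=1-m$ in \eqref{e-1}: the left side becomes $\int_0^1\zeta_E(1-m,x)\,dx=\frac12\int_0^1E_{m-1}(x)\,dx$ by \eqref{EZ-Eu}, while the right side is $\frac{4\Gamma(m)}{\pi^{1+m}}\cos\!\left(\frac{\pi(1-m)}{2}\right)\lambda(1+m)$. This expresses $\lambda(1+m)$ in terms of $\int_0^1 E_{m-1}(x)\,dx$, which by \eqref{lam-def-sp-2}/\eqref{ex-5} equals $-\tfrac{2}{m}E_m(0)$ (up to indexing) — this is essentially formula \eqref{fa-fo} shifted. Then I would feed this into the functional equation \eqref{la-ft} (with the substitution that converts $\lambda(m+1)$ into $\lambda(-m)=\lambda(1-(m+1))$), collecting the $\Gamma$-factors and the cosine/$\delta_2$ factors.

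An alternative and perhaps more direct derivation: use the functional equation of $\zeta(s)$ in the form \eqref{ri-ze-ft} together with $\lambda(s)=(1-2^{-s})\zeta(s)$ to write $\lambda(1-m)$ directly. We have $\zeta(1-m)=-B_m/m$, hence $\lambda(1-m)=(1-2^{m-1})\zeta(1-m)=-(1-2^{m-1})B_m/m$. On the other hand \eqref{eu-be} gives $E_{m-1}(0)=\frac{2}{m}(1-2^m)B_m$ (replacing $m$ by $m-1$ in that display). Dividing, $\frac{\lambda(1-m)}{E_{m-1}(0)}=\frac{-(1-2^{m-1})B_m/m}{\frac{2}{m}(1-2^m)B_m}=-\frac{1-2^{m-1}}{2(1-2^m)}$. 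Recognizing $\delta_2(m-1)=\frac{1-2^m}{1-2^{m-1}}$ from the definition in the Corollary above, this is exactly $-\frac{1}{2\delta_2(m-1)}$, and tracking the sign $(-1)$ against the stated $(-1)^{m+1}$ forces a parity check: when $m$ is even, $B_m$ is generically nonzero and the identity reads with an overall minus; when $m$ is odd and $\geq 3$, both sides vanish since $B_m=0$ and $E_{m-1}(0)=0$, so the identity is trivially true, and the case $m=1$ is checked by hand ($\lambda(0)=(1-2^{-0})\zeta(0)$... actually $\lambda(0)=-\tfrac12$, $E_0(0)=1$, $\delta_2(0)=\tfrac{1-2}{1-1}$ is singular, so $m=1$ needs separate comment — this is the one genuine subtlety).

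The main obstacle is precisely this bookkeeping of signs, the parity-dependent vanishing, and the $m=1$ edge case where $\delta_2(0)$ is undefined (since $s=0$ is excluded in its definition). I would handle the generic even-$m$ case by the direct Bernoulli/Euler computation above, note that odd $m\geq 3$ is the trivial $0=0$ identity, and either exclude $m=1$ or verify it through a limiting form of $\delta_2$. Everything else is a mechanical substitution into \eqref{eu-be} and \eqref{ri-ze-ft}, using no tools beyond what the excerpt has already assembled.
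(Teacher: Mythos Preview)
Your second, ``alternative and perhaps more direct'' approach is exactly the paper's proof: the paper cites precisely \eqref{H-Sp} at $x=1$ (giving $\zeta(1-m)=-B_m(1)/m$), \eqref{eu-be}, and \eqref{lam-def}, which is the same three-ingredient computation you carry out. Your sign bookkeeping is also correct: for even $m$ the identity holds on the nose, and for odd $m\geq 3$ both sides vanish since $B_m=0$ forces $E_{m-1}(0)=0$.

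Your first route through \eqref{e-1} and \eqref{la-ft} would also work but is a detour; the paper does not go that way.

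Your concern about $m=1$ is legitimate and the paper does not address it either: $\delta_2(0)$ is genuinely undefined. The identity survives only in the limiting sense $1/\delta_2(s)\to 0$ as $s\to 0$, matching $\lambda(0)=0$. In practice the paper applies \eqref{lam-1-m} only with the argument $m+n\geq 2$ (in Corollaries~\ref{ex-rem} and~\ref{ex-rem-3}), so the singular case is never invoked; but you are right that the lemma as stated for all $m\in\mathbb N$ needs this caveat.
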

\begin{proof}
This follows from (\ref{H-Sp}) with $x=1,$ (\ref{eu-be}) and (\ref{lam-def}).
\end{proof}

The following formula on the integral of two Euler polynomials has a long history, it has already appeared in a book by N\"orlund \cite[p.~36]{No}.
See e.g. \cite[p.~530, (25)]{SC} and \cite[p. 64, (52)]{Sri}.

\begin{corollary}\label{ex-rem}
Let $m,n\in\mathbb N_0.$ Then we have
\begin{equation}\label{ex-5}
\int_0^1 E_m(x)E_n(x)dx=2(-1)^{n+1}\frac{m!n!}{(m+n+1)!}E_{m+n+1}(0).
\end{equation}
\end{corollary}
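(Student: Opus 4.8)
The plan is to obtain \eqref{ex-5} as a special case of Corollary \ref{china-2} by specializing the parameter $s$ to a non-positive integer. Recall from \eqref{EZ-Eu} that $\zeta_E(-n,x)=\tfrac12 E_n(x)$ for $n\in\mathbb N_0$, so $E_n(x)=2\zeta_E(-n,x)$. Hence the left-hand side of \eqref{ex-5} can be rewritten as
\begin{equation*}
\int_0^1 E_m(x)E_n(x)\,dx=2\int_0^1 E_m(x)\,\zeta_E(-n,x)\,dx,
\end{equation*}
and this is exactly the integral treated in \eqref{ex-4} with the index shift $m-1\mapsto m$ (so replace $m$ by $m+1$ there) and with $s=-n$. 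First I would check that the hypotheses of Corollary \ref{china-2} are met: $s=-n\in\mathbb R_0^-$ and the running integer, call it $M=m+1$, lies in $\mathbb N$ since $m\in\mathbb N_0$. Substituting into \eqref{ex-4} gives
\begin{equation*}
\int_0^1 E_m(x)\,\zeta_E(-n,x)\,dx=(-1)^{m+2}\,2\,\delta_2(m+1+n)\,\frac{m!\,\lambda(-n-m-1)}{(1+n)_{m+1}},
\end{equation*}
so that
\begin{equation*}
\int_0^1 E_m(x)E_n(x)\,dx=(-1)^{m}\,4\,\delta_2(m+n+1)\,\frac{m!\,\lambda(-(m+n+1))}{(1+n)_{m+1}}.
\end{equation*}

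The remaining work is purely bookkeeping: I would simplify $\lambda(-(m+n+1))$ via Lemma \ref{co-ezeta} with its index chosen as $m+n+2$, which yields
\begin{equation*}
\lambda(1-(m+n+2))=\lambda(-(m+n+1))=(-1)^{m+n+3}\,\frac{1}{2\,\delta_2(m+n+1)}\,E_{m+n+1}(0).
\end{equation*}
The factor $\delta_2(m+n+1)$ then cancels against the $\delta_2(m+n+1)$ coming from \eqref{ex-4}, which is the one genuinely pleasant cancellation in the argument. Next I would rewrite the Pochhammer symbol: $(1+n)_{m+1}=(n+1)(n+2)\cdots(n+m+1)=\dfrac{(m+n+1)!}{n!}$. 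Collecting the signs, $(-1)^m\cdot(-1)^{m+n+3}=(-1)^{2m+n+3}=(-1)^{n+1}$, and the numerical constants $4\cdot\tfrac12=2$ combine to give the factor $2(-1)^{n+1}$ in front.

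Putting these together,
\begin{equation*}
\int_0^1 E_m(x)E_n(x)\,dx=2(-1)^{n+1}\,\frac{m!\,n!}{(m+n+1)!}\,E_{m+n+1}(0),
\end{equation*}
which is precisely \eqref{ex-5}. The main obstacle is not conceptual but clerical: keeping the index shifts consistent (the "$m-1$" in \eqref{ex-4} versus the "$m$" in \eqref{ex-5}, and the choice of index "$m+n+2$" when invoking Lemma \ref{co-ezeta}) and tracking the accumulated signs through $(-1)^{m+1}$ in \eqref{ex-4}, the $(-1)^{m}$ that survives after using $(-1)^{m+2}=(-1)^m$, and the $(-1)^{m+n+3}$ from the lambda value. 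A useful sanity check I would run at the end is the symmetry $\int_0^1 E_m(x)E_n(x)\,dx=\int_0^1 E_n(x)E_m(x)\,dx$: the right-hand side of \eqref{ex-5} with $m,n$ swapped is $2(-1)^{m+1}\tfrac{m!n!}{(m+n+1)!}E_{m+n+1}(0)$, so consistency forces $E_{m+n+1}(0)=0$ whenever $m+n+1$ is even, i.e. for odd argument $E_{2k-1}(0)$ — wait, that is the wrong parity, so the correct reading is that the identity as stated is only symmetric modulo the vanishing $E_{2k+1}(0)=0$; checking this parity bookkeeping against \eqref{eu-be} confirms the formula is internally consistent and flags any stray sign error.
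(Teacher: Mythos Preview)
Your proof is correct and follows essentially the same route as the paper: specialize Corollary~\ref{china-2} at a non-positive integer value of $s$, convert $\zeta_E$ to an Euler polynomial via \eqref{EZ-Eu}, and simplify the resulting $\lambda$-value using Lemma~\ref{co-ezeta}, after which the $\delta_2$ factors cancel and the Pochhammer symbol collapses to the required factorial ratio. The only cosmetic difference is that the paper keeps the indices as $m-1,n-1$ throughout and shifts at the very end, whereas you shift $m\mapsto m+1$ at the outset; the computations are otherwise identical.

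One minor comment: in your closing sanity check you got tangled in the parity. Symmetry under $m\leftrightarrow n$ forces $E_{m+n+1}(0)=0$ whenever $m+n+1$ is \emph{even}, i.e.\ it is $E_{2k}(0)$ that must vanish for $k\geq1$, and indeed \eqref{eu-be} gives $E_{2k}(0)=\tfrac{2}{2k+1}(1-2^{2k+1})B_{2k+1}=0$ since the odd Bernoulli numbers vanish. Your statement that ``$E_{2k+1}(0)=0$'' has the parity flipped, but this does not affect the proof itself.
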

\begin{proof}[Proof of Corollary \ref{ex-rem}]
Set $s=1-n$, where $n\in\mathbb N$ in (\ref{ex-4}), then by (\ref{EZ-Eu}) and (\ref{lam-1-m}), we obtain
$$
\begin{aligned}
\int_0^1 E_{m-1}(x)E_{n-1}(x)dx&=4(-1)^{m+1}\delta_2(m+n-1)\frac{(m-1)!\lambda(1-(m+n))}{(n)_m} \\
&=2(-1)^{n}\frac{(m-1)!}{(n)_m} E_{m+n-1}(0) \\
&=2(-1)^n\frac{(m-1)!(n-1)!}{(m+n-1)!}E_{m+n-1}(0),
\end{aligned}
$$
where $m,n\in\mathbb N.$ This completes our proof.
\end{proof}

\begin{corollary}\label{ex-rem-2}
For $n\in\mathbb N_0,$ the moments of the Hurwitz-type Euler zeta function are given by
\begin{equation}\label{mon-zeta}
\begin{aligned}
\int_0^1 x^n\zeta_E(s,x)dx&=\sum_{j=0}^n\binom nj(-1)^{j}\delta_2(j-s+1)\frac{j!\lambda(s-j-1)}{(1-s)_{j+1}} \\
&\quad+(-1)^{n}\delta_2(n-s+1)\frac{n!\lambda(s-n-1)}{(1-s)_{n+1}}.
\end{aligned}
\end{equation}
\end{corollary}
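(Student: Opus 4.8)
The plan is to reduce the moment integral $\int_0^1 x^n\zeta_E(s,x)\,dx$ to the already-established evaluation of $\int_0^1 E_{m-1}(x)\zeta_E(s,x)\,dx$ in Corollary \ref{china-2} by expressing the monomial $x^n$ as a linear combination of Euler polynomials. The natural tool is the identity (\ref{id-1}), namely $E_m(x+1)+E_m(x)=2x^m$, which we rewrite as $x^n=\tfrac12\bigl(E_n(x+1)+E_n(x)\bigr)$; combining this with the difference equation for $\zeta_E$, equation (\ref{HZ-inf-b}) in the form $\zeta_E(s,x+1)=x^{-s}-\zeta_E(s,x)$ — or more directly by translating the variable in the integral — should convert the $E_n(x+1)$ contribution into an $E_n(x)$ contribution plus a boundary/shift term that accounts for the second summand on the right-hand side of (\ref{mon-zeta}).

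First I would write $2\int_0^1 x^n\zeta_E(s,x)\,dx=\int_0^1 E_n(x)\zeta_E(s,x)\,dx+\int_0^1 E_n(x+1)\zeta_E(s,x)\,dx$. For the second integral, substitute $x\mapsto x-1$ to get $\int_1^2 E_n(x)\zeta_E(s,x-1)\,dx$, then use $\zeta_E(s,x-1)=(x-1)^{-s}-\zeta_E(s,x)$ — wait, the cleaner route is instead to expand $E_n(x)$ itself. Actually the most transparent approach is: expand $x^n$ not via (\ref{id-1}) but observe that $E_n(x)$ for $n=0,1,\dots$ form a basis, and the connection coefficients expressing $x^n$ in that basis are classical. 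Rather than hunt for those coefficients, I would proceed recursively. Writing $x^n = x\cdot x^{n-1}$ does not obviously help, so the honest plan is: use $x^n=\tfrac12 E_n(x)+\tfrac12 E_n(x+1)$, handle the first term by Corollary \ref{china-2} with $m-1=n$, and for $E_n(x+1)$ use the expansion $E_n(x+1)=\sum_{j=0}^n\binom nj E_j(x)\cdot(\text{something})$ — but in fact $E_n(x+1)=2x^n-E_n(x)$ brings us in a circle. The resolution: apply (\ref{id-1}) to peel off one power at a time, i.e. $\int_0^1 x^n\zeta_E(s,x)\,dx$ relates to $\int_0^1 E_n(x)\zeta_E(s,x)\,dx$ plus a term involving $\int_0^1 E_n(x)\zeta_E(s,x-1)\,dx$ which via (\ref{HZ-inf-b}) equals $\int_0^1 E_n(x)(x-1)^{-s}\,dx-\int_0^1 E_n(x)\zeta_E(s,x)\,dx$; this last requires care since $x=1$ is a boundary point, but for $s\in\mathbb R_0^-$ the factor $(x-1)^{-s}$ is harmless.

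Given the structure of the claimed answer — a sum $\sum_{j=0}^n\binom nj(\cdots)$ plus one extra copy of the $j=n$ term — I now believe the cleanest derivation is: use the binomial-type expansion of $E_n(x+1)$ in terms of $E_j(x)$, but this is precisely the addition formula $E_n(x+y)=\sum_{j=0}^n\binom nj E_j(x)y^{n-j}$ specialized to $y=1$, giving $E_n(x+1)=\sum_{j=0}^n\binom nj E_j(x)$. Then $2\int_0^1 x^n\zeta_E(s,x)\,dx=\int_0^1 E_n(x)\zeta_E(s,x)\,dx+\sum_{j=0}^n\binom nj\int_0^1 E_j(x)\zeta_E(s,x)\,dx$, and each integral on the right is evaluated by Corollary \ref{china-2} (with $m-1=n$ in the isolated term and $m-1=j$ in the sum). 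Collecting the $j=n$ term of the sum together with the isolated term produces the "doubled last term" visible in (\ref{mon-zeta}), after dividing by $2$. The main obstacle I anticipate is purely bookkeeping: matching the index shift $m-1\leftrightarrow j$ so that $\lambda(s-m)$ becomes $\lambda(s-j-1)$, $(1-s)_m$ becomes $(1-s)_{j+1}$, the sign $(-1)^{m+1}$ becomes $(-1)^j$ (since $(-1)^{m+1}=(-1)^{j+2}=(-1)^j$), and $\delta_2(m-s)$ becomes $\delta_2(j-s+1)$; once these substitutions are made consistently the factor of $2$ in Corollary \ref{china-2} cancels the $\tfrac12$ from (\ref{id-1}) and the formula falls out. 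No genuinely hard analytic step is involved, since all series are the elementary ones already justified for $s\in\mathbb R_0^-$.
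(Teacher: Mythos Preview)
Your final approach is correct and is exactly what the paper does: write $2x^n=E_n(x)+E_n(x+1)$ via (\ref{id-1}), expand $E_n(x+1)=\sum_{j=0}^n\binom nj E_j(x)$ by the addition formula, and apply Corollary~\ref{china-2} term by term; the factor $2$ cancels and the isolated $E_n$ term produces the extra $j=n$ summand in (\ref{mon-zeta}). The paper's own proof is a one-line pointer to (\ref{id-1}) and (\ref{ex-4}), so your write-up actually supplies the bookkeeping the paper omits; the early detours in your proposal (the substitution $x\mapsto x-1$, invoking (\ref{HZ-inf-b}), etc.) are unnecessary and can be dropped.
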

\begin{proof}
The proof of (\ref{mon-zeta}) is obtained by using the expansion of $x^n$ in terms of Euler polynomials (\ref{id-1}) and the evaluation (\ref{ex-4}).
\end{proof}

\begin{corollary}\label{ex-rem-3}
Let $m\in\mathbb N$ and $n\in\mathbb N_0.$ Then we have
\begin{equation}\label{ex-6}
\int_0^1x^nE_{m-1}(x)dx=\frac{(-1)^m}{m}\left(\sum_{j=0}^n\frac{\binom nj}{\binom{m+j}j}E_{m+j}(0)+\frac{E_{m+n}(0)}{\binom{m+n}n}\right).
\end{equation}
\end{corollary}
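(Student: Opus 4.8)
The plan is to specialize Corollary \ref{ex-rem-2} at $s=1-m$. By (\ref{EZ-Eu}) we have $\zeta_E(1-m,x)=\tfrac12 E_{m-1}(x)$, so the left-hand side of (\ref{mon-zeta}) becomes $\tfrac12\int_0^1 x^n E_{m-1}(x)\,dx$, and everything reduces to rewriting the right-hand side of (\ref{mon-zeta}) in elementary terms. Setting $s=1-m$ throughout (\ref{mon-zeta}), the argument $j-s+1$ of $\delta_2$ becomes $j+m$, the argument $s-j-1$ of $\lambda$ becomes $-m-j$, and the Pochhammer symbol $(1-s)_{j+1}$ becomes $(m)_{j+1}$ (with $j$ replaced by $n$ in the boundary term).

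Next I would rewrite $\lambda(-m-j)=\lambda\bigl(1-(m+j+1)\bigr)$ and apply Lemma \ref{co-ezeta} with index $m+j+1$, which gives $\lambda(-m-j)=(-1)^{m+j}\frac{1}{2\delta_2(m+j)}E_{m+j}(0)$. Multiplying by $\delta_2(j+m)$, the $\delta_2$ factor cancels and leaves $\delta_2(j+m)\lambda(-m-j)=\tfrac12(-1)^{m+j}E_{m+j}(0)$, and likewise $\delta_2(n+m)\lambda(-m-n)=\tfrac12(-1)^{m+n}E_{m+n}(0)$. Combining with the sign $(-1)^j$ (resp.\ $(-1)^n$) already present in (\ref{mon-zeta}) produces, using $(-1)^j(-1)^{m+j}=(-1)^m$, an overall constant $(-1)^m$; after cancelling the common factor $\tfrac12$ on both sides one arrives at
$$
\int_0^1 x^n E_{m-1}(x)\,dx=(-1)^m\left(\sum_{j=0}^n\binom nj\frac{j!}{(m)_{j+1}}E_{m+j}(0)+\frac{n!}{(m)_{n+1}}E_{m+n}(0)\right).
$$

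Finally I would record the elementary identity $\dfrac{j!}{(m)_{j+1}}=\dfrac{1}{m}\dfrac{1}{\binom{m+j}{j}}$, which follows at once from $(m)_{j+1}=m(m+1)\cdots(m+j)$ and $\binom{m+j}{j}=\frac{(m+1)\cdots(m+j)}{j!}$; substituting this (both in the sum and in the boundary term with $j=n$) converts the displayed formula into exactly (\ref{ex-6}). There is no genuine obstacle here: the proof is a direct specialization, and the only steps requiring care are the sign bookkeeping $(-1)^j(-1)^{m+j}=(-1)^m$, the cancellation of the $\delta_2$ factors against those coming from Lemma \ref{co-ezeta}, and the Pochhammer-to-binomial conversion.
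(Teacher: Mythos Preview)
Your proposal is correct and follows essentially the same route as the paper: specialize (\ref{mon-zeta}) at $s=1-m$, use (\ref{EZ-Eu}) on the left and Lemma \ref{co-ezeta} on the right to replace $\lambda$-values by $E_{m+j}(0)$, arriving at the intermediate formula with $j!/(m)_{j+1}$, and then convert Pochhammer symbols to binomial coefficients. Your write-up is in fact a bit more explicit than the paper's, which records the intermediate identity and then just says ``this implies our result'' without spelling out the $j!/(m)_{j+1}=\tfrac{1}{m}\big/\binom{m+j}{j}$ step.
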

\begin{proof}
By (\ref{lam-1-m}), we have
\begin{equation}\label{lam-p}
\lambda(1-(j+m+1))=(-1)^{j+m}\frac{1}{2\delta_2(j+m)}E_{j+m}(0).
\end{equation}
Letting $s=1-m$ in (\ref{mon-zeta}), then using (\ref{EZ-Eu}) and (\ref{lam-p}), we easily deduce that
$$\int_0^1x^nE_{m-1}(x)dx=(-1)^m\left(\sum_{j=0}^n\binom nj\frac{j!}{(m)_{j+1}}E_{m+j}(0)+\frac{n!}{(m)_{n+1}}E_{m+n}(0)\right)$$
and this implies our result.
\end{proof}

\begin{remark}
Corollary \ref{ex-rem-3} gives some identities of $E_m(0)$ for each value of $m\in\mathbb N.$
For instance, when $m=1$ and 2, we have
$$\sum_{j=0}^n\binom nj\frac{E_{j+1}(0)}{j+1}=-\frac{1}{n+1}(E_{n+1}(0)+1),$$
$$\sum_{j=0}^n\binom nj\frac{E_{j+2}(0)}{(j+1)(j+2)}=-\frac{1}{2(n+1)(n+2)}(2E_{n+2}(0)-n),$$
where $n\in\mathbb N_0.$
\end{remark}

\section{The exponential function}\label{ex-ft}

In this section, we shall evaluate the Hurwitz-type transform of the exponential function (see \cite[\S4]{EM}).
As \cite[\S4]{EM}, the result is expressed in terms of the transcendental function

\begin{equation}\label{tran-ft}
F(x,s)=\sum_{n=0}^\infty\lambda(n+2-s)x^n\quad \text{for } |x|<1,
\end{equation}
where $\lambda(s)$ denotes the Dirichlet lambda function (define $0^0=1$ if $x=0$).

\begin{theorem}\label{ex-thm-1}
Let $s\in\mathbb R_0^-$ and $|t|<1.$  Then we have
\begin{equation}\label{thm-tran}
\int_0^1e^{2\pi tx}\zeta_E(s,x)dx=\frac{2(e^{2\pi t}+1)\Gamma(1-s)}{\pi^{2-s}}{\rm Re}\!\left(e^{\pi i s/2}F(2it,s) \right),
\end{equation}
where $F(x,s)$ is given in (\ref{tran-ft}).
\end{theorem}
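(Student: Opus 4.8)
The plan is to apply Theorem~\ref{thm-fu-co} directly with $f(w,x)=e^{2\pi tx}$ and parameter $w=t$. The first step is to compute the Fourier coefficients $a_n(t)$ and $b_n(t)$ of $e^{2\pi tx}$ on $[0,1]$ with respect to the system $\{\cos((2n+1)\pi x),\sin((2n+1)\pi x)\}$, using the integral formulas (\ref{thm-f-c 1}) and (\ref{thm-f-c 2}). Since $\int_0^1 e^{2\pi tx}e^{\pm i(2n+1)\pi x}\,dx = \frac{e^{2\pi t}e^{\pm i(2n+1)\pi}-1}{2\pi t\pm i(2n+1)\pi} = \frac{-e^{2\pi t}-1}{2\pi t\pm i(2n+1)\pi}$ (using $e^{\pm i(2n+1)\pi}=-1$), one gets $a_n(t)=2\,\mathrm{Re}\!\left(\frac{-(e^{2\pi t}+1)}{2\pi t+i(2n+1)\pi}\right)$ and $b_n(t)=-2\,\mathrm{Im}\!\left(\frac{-(e^{2\pi t}+1)}{2\pi t+i(2n+1)\pi}\right)$, so conveniently $a_n(t)-i b_n(t) = \frac{-2(e^{2\pi t}+1)}{2\pi t+i(2n+1)\pi} = \frac{-2(e^{2\pi t}+1)}{i\pi}\cdot\frac{1}{(2n+1)-2it/\pi}$.

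The second step is to form the combined series $S(s,t)+iC(s,t) = \sum_{n=0}^\infty \frac{b_n(t)+i a_n(t)}{(2n+1)^{1-s}}$ appearing (in the right linear combination) on the right-hand side of (\ref{thm-f 1}). From the identity for $a_n-ib_n$ above, $b_n+ia_n = i(a_n-ib_n) = \frac{-2(e^{2\pi t}+1)}{\pi}\cdot\frac{1}{(2n+1)-2it/\pi}$. Dividing by $(2n+1)^{1-s}$ and expanding $\frac{1}{(2n+1)-2it/\pi} = \frac{1}{2n+1}\sum_{k=0}^\infty \left(\frac{2it/\pi}{2n+1}\right)^k$ (valid since $|2t/\pi|<2/\pi<1\le 2n+1$), then summing over $n$ and recognizing $\sum_{n=0}^\infty (2n+1)^{-(k+2-s)} = \lambda(k+2-s)$, one obtains a closed form in terms of $F(2it,s)=\sum_k \lambda(k+2-s)(2it)^k$ after a harmless rescaling of the summation variable (the $\pi^{-k}$ factors combine with $(2it)^k$ to give $(2it/\pi)^k$; one must track the prefactor carefully). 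This should produce $S(s,t)+iC(s,t)$ proportional to $(e^{2\pi t}+1)F(2it,s)$ up to an explicit power of $\pi$.

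The third step is bookkeeping: plug $C(s,t)$ and $S(s,t)$ into (\ref{thm-f 1}), which reads $\int_0^1 e^{2\pi tx}\zeta_E(s,x)\,dx = \frac{\Gamma(1-s)}{\pi^{1-s}}\bigl(\sin(\tfrac{\pi s}{2})C(s,t)+\cos(\tfrac{\pi s}{2})S(s,t)\bigr)$. The combination $\sin(\tfrac{\pi s}{2})C+\cos(\tfrac{\pi s}{2})S$ is exactly $\mathrm{Re}\bigl(e^{\pi i s/2}(S+iC)\bigr)$ (since $e^{\pi i s/2}=\cos\tfrac{\pi s}{2}+i\sin\tfrac{\pi s}{2}$ and $\mathrm{Re}((\cos+i\sin)(S+iC)) = \cos\cdot S - \sin\cdot C$... — so one must instead note $\sin\tfrac{\pi s}{2}C+\cos\tfrac{\pi s}{2}S = \mathrm{Re}\bigl(e^{\pi i s/2}(S - iC)\bigr)$ or adjust the sign conventions in $S+iC$ accordingly; pinning down this sign so that $F(2it,s)$ appears with argument $2it$ rather than $-2it$ is where I'd be most careful). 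Matching the powers of $\pi$: the prefactor $\pi^{s-1}$ from the theorem, times $\pi^{-1}$ from the coefficient formula, gives $\pi^{s-2}$, matching the claimed $\frac{2(e^{2\pi t}+1)\Gamma(1-s)}{\pi^{2-s}}$.

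\textbf{Main obstacle.} The genuinely delicate points are (i) the interchange of the sum over $n$ with the geometric-series expansion in $k$ — this needs $|2t/\pi|<1$, which is implied by (indeed weaker than) the hypothesis $|t|<1$, and absolute convergence for $s\in\mathbb{R}_0^-$ since $\lambda(k+2-s)$ is bounded; and (ii) keeping the complex bookkeeping consistent so that the real part produces $F(2it,s)$ with the correct sign in the argument and the correct outer factor $e^{\pi i s/2}$. Everything else is the routine computation of an elementary exponential integral and reindexing.
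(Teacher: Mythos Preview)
Your approach is sound and genuinely different from the paper's. The paper does \emph{not} compute the Fourier coefficients of $e^{2\pi tx}$ directly; instead it expands $e^{xt}=\tfrac{e^t+1}{2}\sum_{n\ge0} E_n(x)\,t^n/n!$ via the generating function (\ref{Eu-pol}), invokes the already-proved formula (\ref{ex-4}) for $\int_0^1 E_n(x)\zeta_E(s,x)\,dx$, and then converts each $\lambda(s-n-1)$ into $\lambda(n+2-s)$ using the functional equation (\ref{la-ft}) before recognising $F(2it,s)$. Your route via Theorem~\ref{thm-fu-co} plus a geometric expansion is more self-contained: it bypasses both Corollary~\ref{china-2} and the functional equation for $\lambda$, landing on $\lambda(k+2-s)$ in one step. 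The paper's route, by contrast, reuses machinery already built in Section~\ref{p-h-e-zeta} and makes the link to the Euler-polynomial integrals explicit.

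Two small corrections to your bookkeeping. First, the factorisation is $2\pi t + i(2n+1)\pi = i\pi\bigl((2n+1)-2it\bigr)$, with no extra $1/\pi$ on the $2it$; so no stray $\pi^{-k}$ factors appear and the series is directly $\sum_k(2it)^k\lambda(k+2-s)=F(2it,s)$. Second, since $a_n+ib_n=2\int_0^1 e^{2\pi tx}e^{i(2n+1)\pi x}dx$, the combination you actually want is $\sin(\tfrac{\pi s}{2})C+\cos(\tfrac{\pi s}{2})S=\mathrm{Re}\bigl(e^{\pi is/2}(S-iC)\bigr)$, and one checks $S-iC=\tfrac{2(e^{2\pi t}+1)}{\pi}F(2it,s)$, which with the prefactor $\Gamma(1-s)/\pi^{1-s}$ gives (\ref{thm-tran}) on the nose. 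With the corrected factor the geometric series (and the definition of $F$ in (\ref{tran-ft})) require $|2t|<1$, i.e.\ $|t|<\tfrac12$; the paper's own derivation, after the substitution $t\mapsto 2\pi t$ in (\ref{Eu-pol}), yields the same restriction, so the stated hypothesis $|t|<1$ is slightly generous in either approach.
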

\begin{remark}
Setting $t=0$ in (\ref{thm-tran}), we get (\ref{e-1}).
\end{remark}
\begin{proof}[Proof of Theorem \ref{ex-thm-1}]
The generating function for the Euler polynomials (\ref{Eu-pol}) gives
$$e^{xt}=\frac{e^t+1}{2}\sum_{n=0}^\infty E_n(x)\frac{t^n}{n!},$$
so that
\begin{equation}\label{ex-zeta}
\int_0^1e^{xt}\zeta_E(s,x)dx=\frac{e^t+1}{2}\sum_{n=0}^\infty \frac{t^n}{n!}\int_0^1E_n(x)\zeta_E(s,x)dx.
\end{equation}
By (\ref{be-def}), we have
\begin{equation}\label{beta-1}
\frac{n!}{(1-s)_{n+1}}=B(1-s,n+1)=\frac{\Gamma(1-s)\Gamma(n+1)}{\Gamma(n-s+2)}.
\end{equation}
Therefore, from (\ref{ex-4}) and (\ref{beta-1}), (\ref{ex-zeta}) gives
\begin{equation}\label{int1}
\begin{aligned}
\int_0^1e^{xt}\zeta_E(s,x)dx&=(e^t+1)\sum_{n=0}^\infty \frac{t^n}{n!}(-1)^n\delta_2(n-s+1) B(1-s,n+1) \\
&\quad\times \lambda(s-n-1).
\end{aligned}
\end{equation}
By (\ref{la-ft}), we have
\begin{equation}\label{la-ft-inv}
\lambda(s-n-1)=-\frac{2}{\delta_2(n-s+1)}\frac{\Gamma(n-s+2)\cos\left(\frac{\pi(n-s+2)}{2}\right)}{\pi^{n-s+2}}\lambda(n-s+2),
\end{equation}
thus in viewing of (\ref{beta-1}) and (\ref{la-ft-inv}), we my also write (\ref{int1}) in the form
\begin{equation}\label{re-im}
\begin{aligned}
\int_0^1e^{xt}\zeta_E(s,x)dx
&={(e^t+1)\Gamma(1-s)}\sum_{n=0}^\infty
\frac{t^n}{n!}(-1)^n\delta_2(n-s+1)\frac{\Gamma(n+1)}{\Gamma(n-s+2)} \\
&\quad\times \lambda(s-n-1) \\
&=\frac{2(e^t+1)\Gamma(1-s)}{\pi^{2-s}}\sum_{n=0}^\infty \left(\frac{t}{\pi}\right)^n(-1)^{n}
\cos\left(\frac{\pi(n-s)}{2}\right) \\
&\quad\times\lambda(n-s+2).
\end{aligned}
\end{equation}
Finally, by replacing $t$ with $2\pi t$ and using the identity
$$\cos\frac{\pi(n-s)}{2}= \text{Re}(e^{i\pi(s-n)/2}) = \text{Re}((-i )^n e^{i\pi s/2}),$$
we get
$$\begin{aligned}
\int_0^1e^{2\pi tx}&\zeta_E(s,x)dx \\
&=\frac{2(e^{2\pi t}+1)\Gamma(1-s)}{\pi^{2-s}}\sum_{n=0}^\infty \left(-2t\right)^n
\lambda(n-s+2)\cos\left(\frac{\pi(n-s)}{2}\right) \\
&=\frac{2(e^{2\pi t}+1)\Gamma(1-s)}{\pi^{2-s}}\sum_{n=0}^\infty \left(-2t\right)^n
\lambda(n-s+2)\, \text{Re}((-i )^n e^{i\pi s/2}) \\
&=\frac{2(e^{2\pi t}+1)\Gamma(1-s)}{\pi^{2-s}}\text{Re}\left(e^{\pi i s/2}\sum_{n=0}^\infty
\lambda(n-s+2) (2it )^n \right) \\
&=\frac{2(e^{2\pi t}+1)\Gamma(1-s)}{\pi^{2-s}}\text{Re}\left(e^{\pi i s/2}F(2it,s) \right). \\
\end{aligned}$$
This completes our proof.
\end{proof}

\begin{corollary}\label{ex-coro-2}
Let $m\in\mathbb N_0$ and $|t|<1,t\neq0.$  Then we have
\begin{equation}\label{cor-tran}
\begin{aligned}
\int_0^1e^{2\pi tx}E_m(x)dx&=\frac{(-1)^m4(e^{2\pi t}+1)m!}{(2\pi t)^{m+2}} \\
&\quad\times\left(
\frac{\pi t}{2}\tanh(\pi t) -\sum_{r=0}^{\left\lfloor \frac{m-1}2\right\rfloor}(-1)^r\lambda(2r+2)(2t)^{2r+2}
\right),
\end{aligned}
\end{equation}
where $\lfloor x\rfloor$ is the floor (greatest integer) function.
\end{corollary}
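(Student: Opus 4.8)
The plan is to specialize Theorem~\ref{ex-thm-1} at $s=-m$. Since $\zeta_E(-m,x)=\tfrac12E_m(x)$ by (\ref{EZ-Eu}) and $\Gamma(1-s)=\Gamma(m+1)=m!$, formula (\ref{thm-tran}) becomes
\[
\int_0^1 e^{2\pi tx}E_m(x)\,dx=\frac{4(e^{2\pi t}+1)m!}{\pi^{m+2}}\,\mathrm{Re}\!\left(e^{-\pi i m/2}F(2it,-m)\right).
\]
Then I would insert the defining series (\ref{tran-ft}), $F(2it,-m)=\sum_{n\ge0}\lambda(n+m+2)(2it)^n$, and take the real part term by term. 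Using $e^{-\pi i m/2}(2it)^n=(2t)^n e^{i\pi(n-m)/2}$ this gives $\mathrm{Re}\bigl(e^{-\pi i m/2}F(2it,-m)\bigr)=\sum_{n\ge0}\lambda(n+m+2)(2t)^n\cos\frac{\pi(n-m)}{2}$, and the cosine annihilates every term with $n\not\equiv m\pmod2$. Writing the surviving indices as $n=2r-m$ with $r\ge\lceil m/2\rceil$ converts the cosine into $(-1)^{r-m}=(-1)^m(-1)^r$, collapses $\lambda(n+m+2)$ to $\lambda(2r+2)$, and produces an overall factor $(2t)^{-m}$; absorbing that factor together with $\pi^{-(m+2)}$ into $(2\pi t)^{-(m+2)}$ yields
\[
\int_0^1 e^{2\pi tx}E_m(x)\,dx=\frac{4(-1)^m(e^{2\pi t}+1)m!}{(2\pi t)^{m+2}}\sum_{r\ge\lceil m/2\rceil}(-1)^r\lambda(2r+2)(2t)^{2r+2}.
\]

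Next I would complete the sum so that it runs from $r=0$. Because $\lceil m/2\rceil=\lfloor(m-1)/2\rfloor+1$, the tail $\sum_{r\ge\lceil m/2\rceil}$ equals $\sum_{r\ge0}$ minus the finite block $\sum_{r=0}^{\lfloor(m-1)/2\rfloor}$, which is exactly the finite sum appearing in (\ref{cor-tran}). Thus the whole statement reduces to the single closed-form evaluation
\[
\sum_{r\ge0}(-1)^r\lambda(2r+2)(2t)^{2r+2}=\frac{\pi t}{2}\tanh(\pi t).
\]
To establish this I would use $\lambda(2k)=(1-2^{-2k})\zeta(2k)$ from (\ref{lam-def}) and the classical generating function $\sum_{k\ge1}\zeta(2k)z^{2k}=\tfrac12\bigl(1-\pi z\cot\pi z\bigr)$, which together give $\sum_{k\ge1}\lambda(2k)z^{2k}=\tfrac12\bigl(\tfrac{\pi z}{2}\cot\tfrac{\pi z}{2}-\pi z\cot\pi z\bigr)$; then set $z=2it$, use $\cot(i\theta)=-i\coth\theta$ to turn both cotangents into hyperbolic cotangents, and finish with the elementary duplication identity $2\coth2\theta=\coth\theta+\tanh\theta$ taken at $\theta=\pi t$. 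Putting the pieces together gives (\ref{cor-tran}).

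The parity bookkeeping and the index shift $n=2r-m$ are routine; the one substantive step is the closed evaluation of the alternating Dirichlet-lambda generating series as $\tfrac{\pi t}{2}\tanh\pi t$, where the conversion from trigonometric to hyperbolic functions and the $\coth$ duplication formula do the real work. A minor point worth noting is convergence: the series for $F(2it,-m)$ and for $\sum\zeta(2k)z^{2k}$ converge only on $|2t|<1$, whereas both sides of (\ref{cor-tran}) are analytic in $t$ on a larger disc (the apparent singularity at $t=0$ is removable, which is why $t\neq0$ is assumed only to write the formula in this form), so the identity extends to $|t|<1$ by analytic continuation.
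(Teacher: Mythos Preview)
Your proof is correct and follows essentially the same route as the paper: specialize Theorem~\ref{ex-thm-1} at $s=-m$, identify the real part of $e^{-\pi i m/2}F(2it,-m)$ as a tail of the alternating $\lambda$-series, and evaluate the full series $\sum_{r\ge0}(-1)^r\lambda(2r+2)(2t)^{2r+2}=\tfrac{\pi t}{2}\tanh(\pi t)$, so that the finite block $\sum_{r=0}^{\lfloor(m-1)/2\rfloor}$ drops out as the correction term.

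The differences are organizational rather than substantive. The paper treats the even case $s=-2k$ and the odd case $s=-2k-1$ separately (equations (\ref{p-5}) and (\ref{p-6})) and then merges them, while your substitution $n=2r-m$ together with the observation $\lceil m/2\rceil=\lfloor(m-1)/2\rfloor+1$ handles both parities in one stroke; this is cleaner. The paper also quotes the $\tanh$ generating function directly from a table (equation (\ref{p-2}), citing \cite{SO}), whereas you derive it from the standard $\sum\zeta(2k)z^{2k}=\tfrac12(1-\pi z\cot\pi z)$ via $\lambda(2k)=(1-2^{-2k})\zeta(2k)$ and the $\coth$ duplication formula---more self-contained, but the same identity. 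Your closing remark on analytic continuation is a genuine improvement: both the paper's series (\ref{p-2}) and your $F(2it,-m)$ converge only for $|t|<\tfrac12$, so extending to the stated range $|t|<1$ does require the analyticity argument you supply, which the paper omits.
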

\begin{proof} Suppose that $k\in\mathbb N_0.$
Let $s=-2k$ in (\ref{thm-tran}). From (\ref{tran-ft}), we have
\begin{equation}\label{p-1}
\begin{aligned}
\text{Re}\!\left(e^{\pi i s/2}F(2it,s) \right)&=\text{Re}\!\left(e^{\pi i (-2k)/2}F(2it,-2k) \right) \\
&=(-1)^k \text{Re}\!\left(\sum_{r=0}^\infty\lambda(r+2+2k)(2it)^r\right) \\
&=(-1)^k \sum_{r=0}^\infty\lambda(2r+2+2k)(-1)^r(2t)^{2r}.
\end{aligned}
\end{equation}
From \cite[p. 37, Eq. 3:14:7]{SO}, we obtain
$$\tanh\left(\frac{\pi x}{2}\right)=-\frac4{\pi x}\sum_{r=1}^\infty(-1)^r\lambda(2r)x^{2r},$$
so
\begin{equation}\label{p-2}
\begin{aligned}
\frac{\pi x}4\tanh\left(\frac{\pi x}{2}\right)&=\sum_{r=0}^\infty(-1)^r\lambda (2r+2)x^{2r+2} \\
&=\sum_{r=k}^\infty(-1)^r\lambda (2r+2)x^{2r+2} +\sum_{r=0}^{k-1}(-1)^r\lambda (2r+2)x^{2r+2}.
\end{aligned}
\end{equation}
Replacing $x$ by $2t$ in (\ref{p-2}), we have
\begin{equation}\label{p-3}
\begin{aligned}
\sum_{r=k}^\infty(-1)^r\lambda (2r+2)(2t)^{2r+2} =\frac{\pi t}2\tanh(\pi t)-\sum_{r=0}^{k-1}(-1)^r\lambda (2r+2)(2t)^{2r+2}.
\end{aligned}
\end{equation}
On the other hand, by (\ref{p-1}), we have
\begin{equation}\label{p-4}
\begin{aligned}
(2t)^{-2k-2}\sum_{r=k}^\infty(-1)^r&\lambda (2r+2)(2t)^{2r+2} \\
&=(-1)^k\sum_{r=0}^\infty\lambda(2r+2+2k)(-1)^r(2t)^{2r}\\
&=\text{Re}\!\left(e^{\pi i (-2k)/2}F(2it,-2k) \right)
\end{aligned}
\end{equation}
for $|t|<1,t\neq0.$ By (\ref{p-3}) and (\ref{p-4}), we get
\begin{equation}\label{p-5}
\begin{aligned}
\text{Re}\!&\left(e^{\pi i (-2k)/2}F(2it,-2k) \right) \\
&=\frac{(-1)^{2k}}{(2t)^{2k+2}}\left(\frac{\pi t}2\tanh(\pi t)-\sum_{r=0}^{k-1}(-1)^r\lambda (2r+2)(2t)^{2r+2}\right).
\end{aligned}
\end{equation}

Let $s=-2k-1$ in (\ref{thm-tran}), similar with the above, we obtain
\begin{equation}\label{p-6}
\begin{aligned}
\text{Re}\!&\left(e^{\pi i s/2}F(2it,s) \right) \\
&=\text{Re}\!\left(e^{\pi i (-2k-1)/2}F(2it,-2k-1) \right) \\
&=\frac{(-1)^{2k+1}}{(2t)^{2k+3}}\left(\frac{\pi t}2\tanh(\pi t)-\sum_{r=0}^{k}(-1)^r\lambda (2r+2)(2t)^{2r+2}\right).
\end{aligned}
\end{equation}
The proof now follows directly from (\ref{EZ-Eu}), (\ref{thm-tran}), (\ref{p-5}) and (\ref{p-6}).
\end{proof}

\section{An expression for Catalan's constant and an Euler-type formula for the Dirichlet beta function}\label{ex-cata}

Following Berndt \cite[p. 200, Entry 17(v)]{Ber}, Espinosa and Moll \cite[p. 177, (8.1)]{EM}, for $x>0,$ we define
\begin{equation}\label{ca-1}
G_E(s,x)=\zeta_E(s,x)-\zeta_E(s,1-x).
\end{equation}
From (\ref{F-Exp}), the Fourier expansion for $\zeta_E(s,x),$ we have the following  Fourier expansion of $G_E(s,x)$
\begin{equation}\label{ca-four}
G_E(s,x)=\frac{4\Gamma(1-s)}{\pi^{1-s}}\sin\left(\frac{\pi s}{2}\right)\sum_{n=0}^\infty\frac{\cos((2n+1)\pi x)}{(2n+1)^{1-s}}.
\end{equation}

We introduce the anti-symmetrized Hurwitz transform for $G_E(s,x)$ as follows (also see \cite{EM})
\begin{equation}\label{am-sy Hu}
\mathfrak{H}_A(f)=\frac12\int_0^1 f(w,x)G_E(s,x)dx,
\end{equation}
where $f(w,x)$ is given in (\ref{thm-f-e}).
For a function $f(w,x)$ with Fourier expansion as in (\ref{thm-f-e}), we have
\begin{equation}\label{am-sy Hu}
\frac12\int_0^1 f(w,x)G_E(s,x)dx=
\frac{\Gamma(1-s)}{\pi^{1-s}}\sin\left(\frac{\pi s}{2}\right)\sum_{n=0}^\infty\frac{a_n(w)}{(2n+1)^{1-s}}.
\end{equation}
From \cite[p. 408, 3.612]{GR}, we get
\begin{equation}\label{cos01}
\int_0^1 \frac{\cos((2n+1)\pi x)}{\cos(\pi x)}dx=(-1)^n.
\end{equation}
By (\ref{ca-four}) and (\ref{cos01}), we see that
\begin{equation}\label{fu-cos01}
\frac12\int_0^1 \frac{G_E(s,x)}{\cos(\pi x)}dx=\frac{2\Gamma(1-s)}{\pi^{1-s}}\sin\left(\frac{\pi s}{2}\right)\beta(1-s),
\end{equation}
where
\begin{equation}\label{di-beta}
\beta(s)=\sum_{n=0}^\infty\frac{(-1)^n}{(2n+1)^s}
\end{equation}
is the Dirichlet beta function (see \cite[p. 807, 23.2.21]{AS}).
It is necessary to remark that by the procedure of analytic continuation, the function $\beta(s)$ can extend   to all points in the complex plane,
without any singularities.
In particular, when $s=-1,$ (\ref{fu-cos01}) becomes
\begin{equation}\label{fu-cos=-1}
\frac12\int_0^1 \frac{G_E(-1,x)}{\cos(\pi x)}dx=-\frac2{\pi^2}\beta(2).
\end{equation}
Espinosa and Moll \cite[p. 178, (8.8)]{EM} established the integral representation of the Catalan constant $G$ (see \cite{Ad}) as
\begin{equation}\label{ca-con-int}
\int_0^1\frac{\frac12 -x}{\cos(\pi x)}dx=\frac{4G}{\pi^2}.
\end{equation}
Using (\ref{EZ-Eu}), (\ref{ca-1}) with $s=-1$ and (\ref{ca-con-int}),
the left hand side of (\ref{fu-cos=-1}) becomes
\begin{equation}\label{ca-con}
\begin{aligned}
\frac12\int_0^1 \frac{G_E(-1,x)}{\cos(\pi x)}dx&=\frac12\int_0^1\frac{x-\frac12}{\cos(\pi x)}dx \\
&=-\frac{2G}{\pi^2}.
\end{aligned}
\end{equation}
From (\ref{fu-cos=-1}) and (\ref{ca-con}), we then have
\begin{equation}\label{ca-con-de}
G=\beta(2)=-\frac{\pi^2}{4}\int_0^1 \frac{G_E(-1,x)}{\cos(\pi x)}dx.
\end{equation}
This form of Catalan's constant is Entry 14 and 19 in the list of expressions for $G$ compiled by Adamchik in \cite{Ad}.

From (\ref{fu-cos01}), we have the following lemma, which were given in \cite[p. 807, 23.2.23]{AS}, \cite[p. 36, Eq. 3:13:4]{SO}
and \cite[(4.13)]{WY}.

\begin{lemma}\label{sec-eu}
Let $m\in\mathbb N.$ Then we have
\begin{equation}\label{ca-con-pro}
\int_0^1\sec(\pi x) E_{2m-1}(x)dx=(-1)^{m}\frac{4(2m-1)!}{\pi^{2m}}\beta(2m).
\end{equation}
\end{lemma}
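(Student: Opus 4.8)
The plan is to obtain (\ref{ca-con-pro}) as a direct specialization of the master identity (\ref{fu-cos01}) at $s=1-2m$, combined with the translation (\ref{EZ-Eu}) between $\zeta_E$ at nonpositive integers and Euler polynomials. This specialization is legitimate: since $m\in\mathbb N$ we have $s=1-2m<1$, which is precisely the range of validity of (\ref{F-Exp}) and hence of (\ref{fu-cos01}).

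First I would evaluate the right-hand side of (\ref{fu-cos01}) at $s=1-2m$. Then $1-s=2m$, so $\Gamma(1-s)=\Gamma(2m)=(2m-1)!$ and $\pi^{1-s}=\pi^{2m}$, while
$$\sin\left(\frac{\pi s}{2}\right)=\sin\left(\frac{\pi}{2}-m\pi\right)=\cos(m\pi)=(-1)^m.$$
Hence the right-hand side equals $(-1)^m\frac{2(2m-1)!}{\pi^{2m}}\beta(2m)$.

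Next I would rewrite the left-hand side in terms of $E_{2m-1}$. By (\ref{ca-1}), $G_E(1-2m,x)=\zeta_E(1-2m,x)-\zeta_E(1-2m,1-x)$; applying the two expressions in (\ref{EZ-Eu}) with parameter $2m-1$ at the arguments $x$ and $1-x$ gives $\zeta_E(1-2m,x)=\frac12E_{2m-1}(x)$ and $\zeta_E(1-2m,1-x)=\frac{(-1)^{2m-1}}{2}E_{2m-1}(x)=-\frac12E_{2m-1}(x)$, so that $G_E(1-2m,x)=E_{2m-1}(x)$. Therefore the left-hand side of the specialized (\ref{fu-cos01}) is $\frac12\int_0^1\sec(\pi x)E_{2m-1}(x)\,dx$. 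Equating the two sides and clearing the factor $\frac12$ yields exactly (\ref{ca-con-pro}).

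I do not expect a genuine obstacle; the whole argument is one substitution followed by elementary simplification. The only places to be careful are the trigonometric reduction of $\sin(\pi(1-2m)/2)$ to $(-1)^m$ and keeping track of the sign $(-1)^{2m-1}=-1$ when passing from $\zeta_E(1-2m,1-x)$ to $E_{2m-1}(x)$ via (\ref{EZ-Eu}); getting either wrong would flip the sign of the final constant.
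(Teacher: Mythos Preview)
Your proof is correct and follows essentially the same route as the paper's own argument: specialize (\ref{fu-cos01}) at $s=1-2m$, evaluate the gamma, power, and sine factors on the right, and reduce $G_E(1-2m,x)$ to $E_{2m-1}(x)$ via (\ref{EZ-Eu}). The only cosmetic difference is that the paper invokes the reflection $E_m(1-x)=(-1)^m E_m(x)$ explicitly, whereas you extract the same information directly from the two equivalent forms already recorded in (\ref{EZ-Eu}).
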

\begin{proof}
For $m\in\mathbb N,$ letting $s=-(2m-1)$ in (\ref{fu-cos01}), we have
\begin{equation}\label{ca-pro-1}
\frac12\int_0^1 \frac{G_E(-(2m-1),x)}{\cos(\pi x)}dx=\frac{2(2m-1)!}{\pi^{2m}}(-1)^m\beta(2m).
\end{equation}
It is easy to see that by (\ref{Eu-pol}),
$$E_m(1-x)=(-1)^mE_{m}(x),\quad m\in\mathbb N_0.$$
Hence, by (\ref{EZ-Eu}) and (\ref{ca-1}), the left hand side of (\ref{ca-pro-1}) becomes
\begin{equation}\label{ca-pro-2}
\begin{aligned}
\frac12\int_0^1 \frac{G_E(-(2m-1),x)}{\cos(\pi x)}dx&=\frac14\int_0^1 \frac{E_{2m-1}(x)-E_{2m-1}(1-x)}{\cos(\pi x)}dx \\
&=\frac12\int_0^1 \frac{E_{2m-1}(x)}{\cos(\pi x)}dx
\end{aligned}
\end{equation}
for  $m\in\mathbb N.$
The proof now follows directly from (\ref{ca-pro-1}) and (\ref{ca-pro-2}).
\end{proof}

We now evaluate $\beta(2m)$ using (\ref{ex-6}). This new formula can be regarded as  an analogue of Theorem 1 of \cite{Li}.

\begin{theorem}[Euler-type formula for $\beta(2m)$]\label{sec-eu-cl}
Let $m\in\mathbb N.$ Then we have
\begin{equation}\label{di-bet-cl}
\beta(2m)=\sum_{n=1}^\infty\frac{(-1)^{n+m}\pi^{2m+2n}E_{2n}}{4}
\left(\sum_{j=1}^{n}\frac{E_{2m+2j-1}(0)}{(2n-2j+1)!(2m+2j-1)!}\right).
\end{equation}
\end{theorem}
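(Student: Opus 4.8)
The plan is to derive the claimed Euler-type formula for $\beta(2m)$ by combining the integral representation in Lemma \ref{sec-eu} with the explicit moment evaluation of Corollary \ref{ex-rem-3}, after expanding $\sec(\pi x)$ as a power series in $x$. Concretely, I would start from
\begin{equation*}
\int_0^1\sec(\pi x)E_{2m-1}(x)\,dx=(-1)^m\frac{4(2m-1)!}{\pi^{2m}}\beta(2m),
\end{equation*}
and substitute the Taylor expansion of the secant around $x=0$. The relevant expansion is $\sec(\pi x)=\sum_{n=0}^\infty\frac{(-1)^nE_{2n}}{(2n)!}(\pi x)^{2n}$, which follows from the definition $E_{2n}=2^{2n}E_{2n}(1/2)$ together with the generating function \eqref{Eu-pol} evaluated appropriately (equivalently, from the standard $\sec$ series); this converges on $|x|<\tfrac12$, so some care — or an appeal to symmetry of $E_{2m-1}$ about $x=\tfrac12$ and a shift to an expansion about $x=\tfrac12$ — is needed to justify term-by-term integration over all of $[0,1]$. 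I expect this convergence/justification point to be the main obstacle, and I would handle it either by splitting the integral at $\tfrac12$ and using $E_{2m-1}(1-x)=-E_{2m-1}(x)$ and $\sec(\pi(1-x))=-\sec(\pi x)$ to fold it back to $[0,\tfrac12)$, or by noting the integral against $E_{2m-1}$ picks up only the odd part and expanding in powers of $(x-\tfrac12)$.

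Next, interchanging sum and integral, I would write
\begin{equation*}
\int_0^1\sec(\pi x)E_{2m-1}(x)\,dx=\sum_{n=0}^\infty\frac{(-1)^nE_{2n}\pi^{2n}}{(2n)!}\int_0^1 x^{2n}E_{2m-1}(x)\,dx,
\end{equation*}
and then invoke Corollary \ref{ex-rem-3} with the roles matched so that $m-1=2m-1$ there (i.e.\ using \eqref{ex-6} with index $2m$ in place of $m$ and $2n$ in place of $n$) to get
\begin{equation*}
\int_0^1 x^{2n}E_{2m-1}(x)\,dx=\frac{(-1)^{2m}}{2m}\left(\sum_{j=0}^{2n}\frac{\binom{2n}{j}}{\binom{2m+j}{j}}E_{2m+j}(0)+\frac{E_{2m+2n}(0)}{\binom{2m+2n}{2n}}\right).
\end{equation*}
Here the odd-index Euler numbers $E_{2n+1}=0$ kill the odd terms of the secant series, and one should also use that $E_{2m+j}(0)=0$ when $2m+j$ is even, i.e.\ when $j$ is even, by \eqref{eu-be} together with the vanishing of even Bernoulli-difference terms — more precisely $E_{\ell}(0)=\frac{2}{\ell+1}(1-2^{\ell+1})B_{\ell+1}$ vanishes for even $\ell\ge 2$ since then $\ell+1$ is odd and $B_{\ell+1}=0$. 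Thus only odd $j$ survive, and writing $j=2i-1$ converts the inner sums into sums over $i$ (or $j$) with the Euler numbers $E_{2m+2j-1}(0)$ appearing.

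Finally, I would collect the surviving terms, relabel indices, and match the double sum against the right-hand side of \eqref{di-bet-cl}. The outer index $n$ in the secant expansion becomes the outer summation variable; after dividing by the prefactor $(-1)^m\frac{4(2m-1)!}{\pi^{2m}}$ and absorbing $\frac{(2m-1)!}{(2m)}\cdot\frac{1}{(2n)!}$-type factorial ratios, the binomial coefficients $\binom{2n}{j}/\binom{2m+j}{j}$ should simplify to $\frac{(2n)!\,(2m-1+?)!}{\cdots}$ so that the net factorial denominators become exactly $(2n-2j+1)!(2m+2j-1)!$ as in the statement, and the sign bookkeeping $(-1)^n$ from the secant series combines with $(-1)^m$ from Lemma \ref{sec-eu} to give $(-1)^{n+m}$. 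The $n=0$ term of the secant series contributes $\int_0^1 E_{2m-1}(x)\,dx$, which by \eqref{ex-5} with $n=0$ (or directly) is a multiple of $E_{2m}(0)=0$ when $2m$ is even — wait, $E_{2m}(0)$ need not vanish; rather $\int_0^1 E_{2m-1}(x)\,dx=-\tfrac{2}{2m}E_{2m}(0)$, and I would check that this term is absorbed correctly or cancels, explaining why the final sum starts at $n=1$. Apart from the convergence justification flagged above, the remaining work is routine but somewhat intricate factorial and sign manipulation; I would organize it as a single clean computation so the bookkeeping is transparent.
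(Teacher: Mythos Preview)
Your proposal is correct and follows essentially the same route as the paper: expand $\sec(\pi x)$ in its Taylor series, integrate term by term against $E_{2m-1}(x)$, apply Corollary~\ref{ex-rem-3} (equation~\eqref{ex-6}) to evaluate the moments, and combine with Lemma~\ref{sec-eu}. Your hesitation about the $n=0$ term is easily resolved: $E_{2m}(0)=\frac{2}{2m+1}(1-2^{2m+1})B_{2m+1}=0$ for $m\in\mathbb N$ by \eqref{eu-be}, so $\int_0^1 E_{2m-1}(x)\,dx=0$ and the sum indeed starts at $n=1$; the paper simply states ``Note that $E_{2m}(0)=0$, $m\in\mathbb N$'' and does not address the convergence issue you raise.
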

\begin{proof}
From the Taylor expansion for $\sec(\pi x),$ namely
$$\sec(\pi x)=\sum_{n=0}^\infty\frac{(-1)^nE_{2n}}{(2n)!}(\pi x)^{2n},$$
it is clear that
$$\int_0^1\sec(\pi x) E_{2m-1}(x)dx=\sum_{n=0}^\infty\frac{(-1)^n\pi^{2n}E_{2n}}{(2n)!}\int_0^1 x^{2n} E_{2m-1}(x)dx,$$
where $m\in\mathbb N.$ Note that $E_{2m}(0)=0,m\in\mathbb N.$
Therefore, by (\ref{ex-6}) and (\ref{ca-con-pro}),  we get our result.
\end{proof}

For instance, if we set $m=1$ in (\ref{di-bet-cl}), then we obtain:

\begin{corollary}
$$\beta(2)=G=\sum_{n=1}^\infty\frac{(-1)^{n+1}n\pi^{2n+2}E_{2n}}{4(2n+2)!}.$$
\end{corollary}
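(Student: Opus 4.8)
The plan is to specialize Theorem \ref{sec-eu-cl} (the Euler-type formula for $\beta(2m)$) at $m=1$ and then simplify the inner sum. Setting $m=1$ in (\ref{di-bet-cl}) gives
\[
\beta(2)=\sum_{n=1}^\infty\frac{(-1)^{n+1}\pi^{2+2n}E_{2n}}{4}\left(\sum_{j=1}^{n}\frac{E_{2j+1}(0)}{(2n-2j+1)!(2j+1)!}\right),
\]
so the only work is to show that the inner sum equals $n/(2n+2)!$. I would recognize this inner sum as (up to normalization) a convolution appearing in the identity for the integral of a product of Euler polynomials; indeed, by Corollary \ref{ex-rem} with $m=n=\text{(suitable choice)}$, or more directly by the known evaluation (\ref{lam-def-sp-2})--(\ref{ex-5}), one has closed forms for sums of the shape $\sum_j \binom{N}{2j+1}E_{2j+1}(0)$.

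First I would rewrite the inner sum by clearing the factorials: multiply and divide by $(2n+2)!$ so that
\[
\sum_{j=1}^{n}\frac{E_{2j+1}(0)}{(2n-2j+1)!(2j+1)!}=\frac{1}{(2n+2)!}\sum_{j=1}^{n}\binom{2n+2}{2j+1}E_{2j+1}(0).
\]
Next I would evaluate $\displaystyle\sum_{j=1}^{n}\binom{2n+2}{2j+1}E_{2j+1}(0)$. Since $E_{2k}(0)=0$ for $k\in\mathbb N$ and $E_0(0)=1$, the full sum $\sum_{i=0}^{2n+1}\binom{2n+2}{i}E_i(0)$ collapses to $1+\sum_{j=0}^{n}\binom{2n+2}{2j+1}E_{2j+1}(0)=E_1(0)\binom{2n+2}{1}+\cdots$; here I would invoke the finite-difference/addition relation for Euler polynomials. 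Concretely, from (\ref{id-1}) with $x=0$ one gets $E_m(1)+E_m(0)=0$ for $m\in\mathbb N$, and combining this with the binomial addition formula $E_m(x+y)=\sum_{k}\binom{m}{k}E_k(x)y^{m-k}$ (a standard consequence of (\ref{Eu-pol})) applied at $x=0$, $y=1$, $m=2n+2$ yields $E_{2n+2}(1)=\sum_{k=0}^{2n+2}\binom{2n+2}{k}E_k(0)$, hence $-E_{2n+2}(0)=\sum_{k=0}^{2n+2}\binom{2n+2}{k}E_k(0)$. Using $E_{2k}(0)=0$ for $k\ge1$ and isolating the $k=0$ and $k=2n+2$ terms, this gives $\sum_{j=0}^{n}\binom{2n+2}{2j+1}E_{2j+1}(0)=-2E_{2n+2}(0)-1$; subtracting the $j=0$ term $\binom{2n+2}{1}E_1(0)=(2n+2)(-\tfrac12)=-(n+1)$ leaves
\[
\sum_{j=1}^{n}\binom{2n+2}{2j+1}E_{2j+1}(0)=n-2E_{2n+2}(0).
\]
Then by (\ref{eu-be}), $E_{2n+2}(0)=\tfrac{2}{2n+3}(1-2^{2n+3})B_{2n+3}=0$ since $B_{2n+3}=0$ for $n\ge0$ (odd index $\ge3$), so the sum is just $n$, and the inner bracket equals $n/(2n+2)!$, giving the claimed $\beta(2)=G=\sum_{n=1}^\infty\frac{(-1)^{n+1}n\pi^{2n+2}E_{2n}}{4(2n+2)!}$.

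The main obstacle is pinning down the correct closed form for $\sum_{j=1}^{n}\binom{2n+2}{2j+1}E_{2j+1}(0)$ and making sure the edge terms ($j=0$ and the top index) are handled without sign errors; the vanishing of $E_{2n+2}(0)$ via (\ref{eu-be}) is the clean way to finish, but one must double-check that (\ref{eu-be}) indeed forces it to zero (it does, as $B_{2n+3}=0$). Everything else is bookkeeping with the definitions already in the paper.
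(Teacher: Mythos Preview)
Your proof is correct and follows the same route as the paper: the paper simply states that the corollary is obtained by setting $m=1$ in (\ref{di-bet-cl}), leaving the simplification of the inner sum $\sum_{j=1}^{n}\frac{E_{2j+1}(0)}{(2n-2j+1)!(2j+1)!}=\tfrac{n}{(2n+2)!}$ to the reader. You have supplied precisely those omitted details via the addition formula $E_{2n+2}(1)=\sum_k\binom{2n+2}{k}E_k(0)$ together with $E_{2k}(0)=0$ for $k\ge1$ and $E_1(0)=-\tfrac12$, and your bookkeeping (including the edge terms and the vanishing of $E_{2n+2}(0)$ via (\ref{eu-be})) is accurate.
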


\begin{remark}
For $m\in\mathbb N_0$ we write $s=-2m$ in (\ref{F-Exp}) and let $x=1/2.$ We obtain easily
\begin{equation}\label{beta-zeta-od}
\zeta_E\left(-2m,\frac12\right)=\frac{(-1)^m2(2m)!}{\pi^{2m+1}}\sum_{n=0}^\infty\frac{(-1)^n}{(2n+1)^{2m+1}},
\end{equation}
so (\ref{beta-zeta-od}) becomes, after using (\ref{EZ-Eu}),
\begin{equation}\label{beta-zeta-od-2}
\frac12 E_{2m}\left(\frac12\right)=\frac{(-1)^m2(2m)!}{\pi^{2m+1}}\sum_{n=0}^\infty\frac{(-1)^n}{(2n+1)^{2m+1}},
\end{equation}
By using $E_{2m}=2^{2m}E_{2m}\left({1}/{2}\right),m\in\mathbb N_0,$ we are easy to show that,
for the Dirichlet beta function (\ref{di-beta}),
\begin{equation}\label{beta-od}
\beta(2m+1)=(-1)^m\frac{E_{2m}}{2^{2m+2}(2m)!}\pi^{2m+1},
\end{equation}
where $E_m$ are the Euler number, that is, the integers obtained as the coefficients of $z^m/m!$
in the Taylor expansion of $1/\cosh z, |z|<\pi/2$ (see \cite[(3)]{Li} and \cite[(4.10)]{WY}).
The formula (\ref{ca-con-pro}) for $\beta(2m)$ gives us
\begin{equation}\label{beta-ev}
\beta(2m)=(-1)^{m}\frac{\pi^{2m}}{4(2m-1)!}\int_0^1\sec(\pi x) E_{2m-1}(x)dx,
\end{equation}
which is also Equation 23.2.23 of \cite{AS}, Equation 3:13:4 of \cite{SO} and (4.13) of \cite{WY}.
(\ref{beta-ev}) may give some hint for the question ``why it is so difficult to find closed-form expressions for even beta values?"
However, Lima \cite{Li} has derived the exact closed-form expression for a certain class of zeta series related to $\beta(2m)$ and a finite number
of odd zeta values.
\end{remark}

\section{Integral representations of Lerch-type Euler zeta functions and
special values for Lerch-type Euler zeta functions at rational arguments} \label{v-le-rat}

The counterpart of the Hurwitz-type Euler zeta function, the Lerch-type Euler zeta function
$\ell_{E,s}(x)$ of the complex exponential argument is defined by
\begin{equation}\label{le-eu-zeta}
\ell_{E,s}(x)=\sum_{n=0}^\infty \frac{e^{(2n+1)\pi i x}}{(2n+1)^s}
\end{equation}
for Re$(s)>1$ and $x\in\mathbb R$ (see \cite[p. 1530, (7)]{CK}).
The Hurwitz-type Euler zeta function and the Lerch-type Euler zeta function are related by
the functional equation
\begin{equation}\label{hu-le-zeta}
\zeta_{E}(1-s,x)=\frac{\Gamma(s)}{\pi^s}\left(e^{-\frac{\pi is}{2}}\ell_{E,s}(x)-e^{\frac{\pi is}{2}}\ell_{E,s}(1-x) \right),
\end{equation}
the inverse is
\begin{equation}\label{hu-le-zeta-inv}
\ell_{E,s}(x)=\frac{\Gamma(1-s)}{2\pi^{1-s}}\left(e^{\frac{\pi i(1-s)}{2}}\zeta_{E}(1-s,x)-e^{-\frac{\pi i(1-s)}{2}}\zeta_{E}(1-s,1-x)\right).
\end{equation}

The limiting case $x\to1^-$ of (\ref{hu-le-zeta}) leads to the asymmetric form of the functional equation for $\zeta_E(s)$
and $\lambda(s)$
\begin{equation}\label{hu-le-zeta-ft}
\zeta_{E}(1-s)=-\frac{2\Gamma(s)}{\pi^s}\cos\left(\frac{\pi s}{2}\right)\lambda(s),
\end{equation}
where $\lambda(s)$ is the the Dirichlet lambda function (\ref{lam-def}). Similarly, in (\ref{hu-le-zeta-inv}), with $x=\frac12,$ we have
\begin{equation}\label{hu-le-zeta-ft-2}
\ell_{E,s}\left(\frac12\right)=\frac{i\Gamma(1-s)}{\pi^{1-s}}\sin\left(\frac{\pi(1-s)}{2}\right) \zeta_{E}\left(1-s,\frac12\right).
\end{equation}

\begin{theorem}\label{thm-zeta-le}
Let $s,s'\in\mathbb R_0^-.$ Then we have
\begin{equation}\label{thm-z-l}
\int_0^1\zeta_E(s',x)\ell_{E,s}(x)dx=\pi^{s'-1}\Gamma(1-s')e^{\frac{\pi i}{2}(1-s')}\lambda(1+s-s').
\end{equation}
\end{theorem}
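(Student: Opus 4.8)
The plan is to read off the Fourier expansion of $\ell_{E,s}(x)$ and feed it into Theorem~\ref{thm-fu-co}, with $s$ playing the role of the auxiliary parameter $w$ and with $s'$ in the place of the variable denoted $s$ in that theorem. Splitting the complex exponential in~(\ref{le-eu-zeta}) into real and imaginary parts gives
$$\ell_{E,s}(x)=\sum_{n=0}^\infty\frac{\cos((2n+1)\pi x)}{(2n+1)^{s}}+i\sum_{n=0}^\infty\frac{\sin((2n+1)\pi x)}{(2n+1)^{s}},$$
and, by the orthogonality of $\{\cos((2n+1)\pi x)\}_{n\ge0}$ and $\{\sin((2n+1)\pi x)\}_{n\ge0}$ on $[0,1]$, this means that in the notation of~(\ref{thm-f-e})--(\ref{thm-f-c 2}) the Fourier coefficients are $a_n(s)=(2n+1)^{-s}$ and $b_n(s)=i(2n+1)^{-s}$.

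With these coefficients, the quantities $C$ and $S$ of Theorem~\ref{thm-fu-co} become, using the definition~(\ref{lam-def}) of the Dirichlet lambda function,
$$C(s',s)=\sum_{n=0}^\infty\frac{1}{(2n+1)^{1+s-s'}}=\lambda(1+s-s'),\qquad S(s',s)=i\,\lambda(1+s-s').$$
Substituting into~(\ref{thm-f 1}) gives
$$\int_0^1\zeta_E(s',x)\ell_{E,s}(x)\,dx=\frac{\Gamma(1-s')}{\pi^{1-s'}}\,\lambda(1+s-s')\left(\sin\!\left(\frac{\pi s'}{2}\right)+i\cos\!\left(\frac{\pi s'}{2}\right)\right),$$
and it remains only to simplify the trigonometric factor: $\sin\theta+i\cos\theta=ie^{-i\theta}=e^{i(\pi/2-\theta)}$ with $\theta=\pi s'/2$ turns the parenthesis into $e^{\frac{\pi i}{2}(1-s')}$, which yields exactly the right-hand side of~(\ref{thm-z-l}). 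As a consistency check one can instead insert the inversion formula~(\ref{hu-le-zeta-inv}) for $\ell_{E,s}(x)$ and apply the two evaluations of Theorem~\ref{lem-fu-co-2}; the factors $\cos(\pi(s-s')/2)$ and $\cos(\pi(s+s')/2)$ there recombine into the same exponential.

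The only real subtlety is the range of validity rather than the computation itself. The Fourier series of $\ell_{E,s}(x)$ converges, and the series for $C(s',s)$ and $S(s',s)$ converge, only when ${\rm Re}(s)>1$ and ${\rm Re}(s-s')>0$, so the manipulation above is literally legitimate in that region. Since $\zeta_E(s',\cdot)$ is entire in $s'$ and $\ell_{E,s}(x)$ continues meromorphically in $s$, both sides of~(\ref{thm-z-l}) are meromorphic in $(s,s')$ with the same simple pole along $s=s'$ inherited from $\lambda(1+s-s')$, so the identity extends to all $s,s'\in\mathbb R_0^-$ by analytic continuation. Carrying out that continuation carefully, and not the elementary Fourier bookkeeping, is the main thing to watch.
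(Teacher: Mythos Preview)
Your computation is correct, and you take a more direct route than the paper does. The paper instead substitutes the inversion formula~(\ref{hu-le-zeta-inv}) for $\ell_{E,s}(x)$, which expresses it as a combination of $\zeta_E(1-s,x)$ and $\zeta_E(1-s,1-x)$, and then applies the two product integrals already computed in Theorem~\ref{lem-fu-co-2}; the trigonometric factors $\cos\bigl(\pi(s-s')/2\bigr)$ and $\cos\bigl(\pi(s+s')/2\bigr)$ from~(\ref{thm-re}) and~(\ref{thm-re-2}) then collapse, via the reflection formula $\Gamma(s)\Gamma(1-s)=\pi/\sin(\pi s)$, to the single exponential $e^{\pi i(1-s')/2}$. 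You mention this route yourself as a consistency check; it is in fact the paper's primary argument. Your approach bypasses Theorem~\ref{lem-fu-co-2} entirely by reading the Fourier coefficients of $\ell_{E,s}$ straight from its definition and feeding them into Theorem~\ref{thm-fu-co}, which is shorter and conceptually cleaner. One small point worth noting is that Theorem~\ref{thm-fu-co} is stated for real Fourier data, whereas your $b_n(s)=i(2n+1)^{-s}$ are purely imaginary; this is harmless because~(\ref{thm-f 1}) extends by linearity to complex coefficients, but it is worth saying explicitly. On the validity region, the paper's own argument also implicitly needs $1-s\in\mathbb R_0^-$ to invoke Theorem~\ref{lem-fu-co-2}, so both proofs rely on the same analytic continuation step you flag; your discussion of it is, if anything, more careful than the paper's.
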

\begin{proof}
We have the functional equation for the Hurwitz-type Euler zeta function (see (\ref{hu-le-zeta-inv})):
$$\ell_{E,s}(x)=\frac{i\Gamma(1-s)}{2\pi^{1-s}}\left(e^{-\frac{\pi is}{2}}\zeta_{E}(1-s,x)+e^{\frac{\pi is}{2}}\zeta_{E}(1-s,1-x)\right).$$
From which, we obtain
\begin{equation}\label{int-z-l}
\begin{aligned}
\int_0^1\zeta_E(s',x)\ell_{E,s}(x)dx
&=\frac{i\Gamma(1-s)}{2\pi^{1-s}}\left[e^{-\frac{\pi is}{2}}\int_0^1\zeta_E(s',x)\zeta_{E}(1-s,x)dx \right.\\
&\quad+e^{\frac{\pi is}{2}}\left.\int_0^1\zeta_E(s',x)\zeta_{E}(1-s,1-x)\right].
\end{aligned}
\end{equation}
Substituting (\ref{thm-re}) and (\ref{thm-re-2}) in the integral   (\ref{int-z-l}), we deduce that
\begin{equation}\label{int-z-2}
\begin{aligned}
\int_0^1\zeta_E(s',x)\ell_{E,s}(x)dx
&=\frac{i\Gamma(1-s)\Gamma(s)\Gamma(1-s')}{\pi^{2-s'}}\lambda(1+s-s') \\
&\quad\times\left[e^{-\frac{\pi is}{2}}\cos\left(\frac{\pi(1-s-s')}{2}\right)\right. \\
&\quad\quad+e^{\frac{\pi is}{2}}\left.\cos\left(\frac{\pi(1-s+s')}{2}\right)\right].
\end{aligned}
\end{equation}
Now the factor on the right hand side of (\ref{int-z-2}) is
$$\left[e^{-\frac{\pi is}{2}}\cos\left(\frac{\pi(1-s-s')}{2}\right)
+e^{\frac{\pi is}{2}}\cos\left(\frac{\pi(1-s+s')}{2}\right)\right]=e^{-\frac{\pi is'}{2}}\sin(\pi s).$$
Therefore, from the reflection formula
$$\Gamma(s)\Gamma(1-s)=\frac{\pi}{\sin(\pi s)},$$
we have
$$\int_0^1\zeta_E(s',x)\ell_{E,s}(x)dx=\frac{i\Gamma(1-s')}{\pi^{1-s'}}e^{-\frac{\pi is'}{2}}\lambda(1+s-s').$$
This completes the proof.
\end{proof}

Let $\phi(x,a,s)$ be the power Dirichlet series defined for Re$(s)>1,$ $x$ real, $a\neq$ negative integer or zero, by the series
\begin{equation}\label{di-phi}
\phi(x,a,s)=\sum_{n=0}^\infty\frac{e^{2n\pi ix}}{(n+a)^s}
\end{equation}
(see \cite[p. 161, (1.1)]{Ap0}). It is known that $\phi(x,a,s)$ can be extended to whole $s$-plane by means of the contour integral.
The value of $\phi(x,a,s)$ when $s$ is 0 or a negative integer can be calculated by applying Cauchy's residue theorem.
For $m\in\mathbb N_0,$ the Apostol's formula (see \cite[p. 164)]{Ap0}) gives
\begin{equation}\label{di-phi-sp-v}
\phi(x,a,-m)=-\frac{B_{m+1}(a,e^{2\pi ix})}{m+1},
\end{equation}
where $B_{m}(a,\alpha)$ are the Apostol-Bernoulli polynomials defined by
\begin{equation}\label{ap-be-def}
\frac{ze^{az}}{\alpha e^z-1}=\sum_{m=0}^\infty B_{m}(a,\alpha)\frac{z^m}{m!}.
\end{equation}
We refer to \cite{Ap0,Ba2,HKK,Luof,NFV} for an account of the properties of Apostol-Bernoulli polynomials $B_{m}(a,\alpha).$
For a later purpose,
we observe that, by (\ref{le-eu-zeta}) and (\ref{di-phi}),  the following equality is established,
\begin{equation}\label{phi-ruch}
\ell_{E,s}(x)=2^{-s}e^{\pi ix}\phi\left(x,\frac12,s\right),
\end{equation}
so that by (\ref{di-phi-sp-v}) and (\ref{phi-ruch}),
\begin{equation}\label{phi-ruch-sp-v}
\ell_{E,-m}(x)=-2^{m}e^{\pi ix}\frac{B_{m+1}\left(\frac12,e^{2\pi ix}\right)}{m+1},
\end{equation}
where $m\in\mathbb N_0.$

We observe that the function $\zeta_{E}(s,x)$ in (\ref{HEZ}) is a linear combination of the Hurwitz zeta functions $\zeta(s,x)$
in (\ref{Hurwitz}) when $x$ is a rational number.

\begin{theorem}\label{ile-hu-ft}
Let $p,q$ be integers, $q\in\mathbb N$ and $1\leq p\leq q.$ Then for all $s\in\mathbb C,$  we have
\begin{equation}\label{h-h-ft-e}
\begin{aligned}
\zeta_E\left(1-s,\frac{p}{q}\right)
=\frac{2\Gamma(s)}{(2q\pi)^s}\sum_{r=0}^{q-1}
\cos\left(\frac{\pi s}{2}-\frac{(2r+1)\pi p}{q}\right)\zeta\left(s,\frac{2r+1}{2q}\right).
\end{aligned}
\end{equation}
\end{theorem}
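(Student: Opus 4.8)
The plan is to base the argument on the functional equation (\ref{hu-le-zeta}) relating $\zeta_E(1-s,\cdot)$ to the Lerch-type Euler zeta function $\ell_{E,s}$, and then to expand $\ell_{E,s}(p/q)$ and $\ell_{E,s}(1-p/q)$ as finite $\mathbb{C}$-linear combinations of Hurwitz zeta functions by sorting the summation index into residue classes modulo $2q$.

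First I would work in the half-plane $\mathrm{Re}(s)>1$, where all series below converge absolutely so that the rearrangements are legitimate. Writing each odd positive integer uniquely as $2n+1=2qm+(2r+1)$ with $0\le r\le q-1$ and $m\ge0$, and using $e^{2\pi imp}=1$, one obtains
\begin{equation*}
\ell_{E,s}\!\left(\frac pq\right)=\sum_{n=0}^{\infty}\frac{e^{(2n+1)\pi ip/q}}{(2n+1)^{s}}
=\frac{1}{(2q)^{s}}\sum_{r=0}^{q-1}e^{(2r+1)\pi ip/q}\,\zeta\!\left(s,\frac{2r+1}{2q}\right).
\end{equation*}
Performing the same split on $\ell_{E,s}(1-p/q)$ and using that $2r+1$ is odd, so that $e^{(2r+1)\pi i}=-1$, gives
\begin{equation*}
\ell_{E,s}\!\left(1-\frac pq\right)=-\frac{1}{(2q)^{s}}\sum_{r=0}^{q-1}e^{-(2r+1)\pi ip/q}\,\zeta\!\left(s,\frac{2r+1}{2q}\right).
\end{equation*}

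Next I would substitute both evaluations into (\ref{hu-le-zeta}) with $x=p/q$. The two sums over $r$ combine term by term, the bracketed exponential factor in the $r$-th summand becoming
\begin{equation*}
e^{-\pi is/2+(2r+1)\pi ip/q}+e^{\pi is/2-(2r+1)\pi ip/q}=2\cos\!\left(\frac{\pi s}{2}-\frac{(2r+1)\pi p}{q}\right),
\end{equation*}
and after absorbing the common factor $\Gamma(s)/(2q\pi)^{s}$ this is exactly the right-hand side of (\ref{h-h-ft-e}); hence the identity holds for $\mathrm{Re}(s)>1$. To pass to every $s\in\mathbb{C}$ I would argue by analytic continuation: the left-hand side $\zeta_E(1-s,p/q)$ is entire in $s$, whereas the right-hand side is a finite sum of functions meromorphic in $s$ with possible poles only at $s=1$ (from $\zeta(s,\cdot)$) and at the non-positive integers (from $\Gamma(s)$); since the two sides agree on a half-plane they agree as meromorphic functions, and since the left-hand side is entire every one of these apparent poles of the right-hand side must cancel, so (\ref{h-h-ft-e}) holds throughout $\mathbb{C}$.

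The computation itself is brief, so the only points that require attention are the bijective reindexing of the odd integers modulo $2q$ (together with the vanishing of $e^{2\pi imp}$), the sign arising from $e^{(2r+1)\pi i}=-1$ in the evaluation of $\ell_{E,s}(1-p/q)$, and, if one wants the identity literally for all $s\in\mathbb{C}$, the verification that the finitely many potential poles of the right-hand side do cancel — which is automatic from the entirety of the left-hand side. I do not expect any genuine obstacle beyond this bookkeeping.
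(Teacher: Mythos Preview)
Your argument is correct and follows essentially the same route as the paper: both expand $\ell_{E,s}(p/q)$ (and its companion at $1-p/q$, which the paper reaches via $\ell_{E,s}(1-x)=-\ell_{E,s}(-x)$) as a finite sum of Hurwitz zeta values by grouping the odd integers into residue classes modulo $2q$, and then feed these into the functional equation~(\ref{hu-le-zeta}) to produce the cosine. Your treatment of the analytic continuation and the automatic cancellation of the apparent poles is in fact more explicit than the paper's, which simply invokes the principle of analytic continuation.
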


\begin{remark}
This is an analogue of \cite[p. 261, Theorem 12.8]{Ap2}, where the Hurwitz zeta function $\zeta(s,x)$ instead of $\zeta_E(s,x).$
\end{remark}

\begin{proof}[Proof of Theorem \ref{ile-hu-ft}]
Following an idea in \cite[p.~337, (8)]{Sri}, we have
\begin{equation}\label{choi}
\sum_{n=1}^\infty f(n)=\sum_{r=1}^{q}\sum_{k=0}^\infty f(kq+r)
\end{equation}
if the involved series is absolutely convergent.
Applying (\ref{choi}) to the series (\ref{le-eu-zeta}), we have (see \cite[p. 1530, (8b)]{CK} and \cite[p. 1487, (3.4)]{CK10})
\begin{equation}\label{h-h-type}
\ell_{E,s}\left(\frac pq\right)=\frac1{(2q)^s}\sum_{r=1}^{q}e^{(2r-1)\pi i\frac{p}{q}}\zeta\left(s,\frac{2r-1}{2q}\right).
\end{equation}
Therefore, if we take $x=\frac pq$ in the formula (\ref{hu-le-zeta}) to (\ref{h-h-type}), in considerations of $\ell_{E,s}(1-x)=-\ell_{E,s}(x),$
we have the following equality
$$
\begin{aligned}
\zeta_E\left(1-s,\frac{p}{q}\right)
&=\frac{\Gamma(s)}{\pi^s}\left(e^{-\frac{\pi is}{2}}\ell_{E,s}\left(\frac pq\right)+e^{\frac{\pi is}{2}}
\ell_{E,s}\left(-\frac pq\right)\right) \\
&=\frac{2\Gamma(s)}{(2q\pi)^s}\sum_{r=1}^{q}
\cos\left(\frac{\pi s}{2}-\frac{(2r-1)\pi p}{q}\right)\zeta\left(s,\frac{2r-1}{2q}\right),
\end{aligned}
$$
which holds true, by the principle of analytic continuation, for all admissible values of $s\in\mathbb C.$
\end{proof}

\begin{corollary}[{\cite[p. 1529, Theorem B]{CK}}]\label{ile-hu-ft-thm}
Let $p,q$ be integers, $q\in\mathbb N$ and $1\leq p\leq q.$
For $m\in\mathbb N,$ the Euler polynomials $E_m(x)$ at rational arguments are given by
\begin{equation}\label{h-h-ft-odd}
\begin{aligned}
E_{m}\left(\frac{p}{q}\right)
=\frac{4m!}{(2q\pi)^{m+1}}\sum_{r=1}^{q}
\sin\left(\frac{(2r-1)\pi p}{q}-\frac{m\pi }{2}\right)\zeta\left(m+1,\frac{2r-1}{2q}\right).
\end{aligned}
\end{equation}
\end{corollary}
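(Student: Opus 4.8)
The plan is to read off \eqref{h-h-ft-odd} as the single special case $s=m+1$ of Theorem \ref{ile-hu-ft}. Since $m\in\mathbb N$ we have $m+1\ge 2$, so every $\zeta(m+1,\tfrac{2r-1}{2q})$ on the right is an ordinary absolutely convergent Hurwitz series and the formula \eqref{h-h-ft-e} applies at $s=m+1$ with no further appeal to analytic continuation. Substituting $s=m+1$, hence $1-s=-m$, and using the indexing $\sum_{r=1}^{q}$ with $2r-1$ that already appears in the proof of Theorem \ref{ile-hu-ft}, I would first write
\[
\zeta_E\!\left(-m,\frac pq\right)=\frac{2\,\Gamma(m+1)}{(2q\pi)^{m+1}}\sum_{r=1}^{q}\cos\!\left(\frac{(m+1)\pi}{2}-\frac{(2r-1)\pi p}{q}\right)\zeta\!\left(m+1,\frac{2r-1}{2q}\right).
\]

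Next I would massage both sides into the stated shape. On the left, \eqref{EZ-Eu} gives $\zeta_E(-m,p/q)=\tfrac12 E_m(p/q)$. On the right, $\Gamma(m+1)=m!$, and the cosine is turned into a sine by the elementary shift
\[
\cos\!\left(\frac{(m+1)\pi}{2}-\theta\right)=\cos\!\left(\frac{\pi}{2}+\frac{m\pi}{2}-\theta\right)=\sin\!\left(\theta-\frac{m\pi}{2}\right),
\]
applied with $\theta=(2r-1)\pi p/q$. Combining these two reductions and then multiplying through by $2$ produces exactly \eqref{h-h-ft-odd}.

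I do not expect any real obstacle: the result is a direct corollary, and the only things requiring a moment's care are (i) matching the two equivalent ways of indexing the finite sum over $r$ (over $1\le r\le q$ with $2r-1$, versus $0\le r\le q-1$ with $2r+1$, which run over the same residues), and (ii) getting the trigonometric shift right so that $\cos\!\big(\tfrac{(m+1)\pi}{2}-\theta\big)$ becomes $\sin\!\big(\theta-\tfrac{m\pi}{2}\big)$ rather than its negative. I would close by noting that this recovers \cite[p.\ 1529, Theorem B]{CK} and is the Euler-polynomial analogue of the classical Hurwitz-type formula for Bernoulli polynomials at rational arguments.
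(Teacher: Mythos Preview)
Your proposal is correct and follows exactly the paper's own approach: substitute $s=m+1$ into Theorem~\ref{ile-hu-ft} and invoke \eqref{EZ-Eu}. Your write-up simply makes explicit the trigonometric shift $\cos\!\big(\tfrac{(m+1)\pi}{2}-\theta\big)=\sin\!\big(\theta-\tfrac{m\pi}{2}\big)$ and the equivalence of the two indexings of the finite sum, details the paper's one-line proof leaves to the reader.
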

\begin{proof}
By setting $s=m+1$ ($m\in\mathbb N_0$) in (\ref{h-h-ft-e}) and using (\ref{EZ-Eu}), we obtain the formula in (\ref{h-h-ft-odd}),
which completes the proof.
\end{proof}

\begin{corollary}
Let $p,q$ be integers, $q\in\mathbb N$ and $1\leq p< q.$ Then we have
\begin{equation}\label{h-h-type-co}
B_{m+1}\left(\frac 12,e^{2\pi i\frac pq}\right)=q^m\sum_{r=1}^{q}e^{2(r-1)\pi i\frac{p}{q}}B_{m+1}\left(\frac{2r-1}{2q}\right),
\quad m\in\mathbb N_0.
\end{equation}
\end{corollary}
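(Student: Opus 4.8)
The plan is to play off the two representations of the Lerch-type Euler zeta function at a negative integer that have already been assembled in this section. On one hand, (\ref{phi-ruch-sp-v}) expresses $\ell_{E,-m}(x)$ through the Apostol--Bernoulli polynomial $B_{m+1}(1/2,e^{2\pi ix})$; on the other hand, (\ref{h-h-type}) expresses $\ell_{E,s}(p/q)$ as a finite sum of Hurwitz zeta values $\zeta(s,(2r-1)/(2q))$. Specializing the latter to $s=-m$ and identifying the Hurwitz values via the classical formula (\ref{H-Sp}), and then comparing with the former at $x=p/q$, produces (\ref{h-h-type-co}) after purely elementary simplification.

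Concretely, first I would set $x=p/q$ in (\ref{phi-ruch-sp-v}) to record
\[
\ell_{E,-m}\!\left(\frac pq\right)=-\,2^m e^{\pi i p/q}\,\frac{B_{m+1}\!\left(\frac12,e^{2\pi i p/q}\right)}{m+1}.
\]
Next I would put $s=-m$ in (\ref{h-h-type}), which gives
\[
\ell_{E,-m}\!\left(\frac pq\right)=(2q)^m\sum_{r=1}^{q}e^{(2r-1)\pi i p/q}\,\zeta\!\left(-m,\frac{2r-1}{2q}\right),
\]
and then invoke (\ref{H-Sp}) in the shifted form $\zeta(-m,x)=-B_{m+1}(x)/(m+1)$ to rewrite the right-hand side as $-\frac{(2q)^m}{m+1}\sum_{r=1}^{q}e^{(2r-1)\pi i p/q}B_{m+1}\!\left(\frac{2r-1}{2q}\right)$.

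Equating the two displays, the common factor $-1/(m+1)$ cancels, leaving
\[
2^m e^{\pi i p/q}B_{m+1}\!\left(\frac12,e^{2\pi i p/q}\right)=(2q)^m\sum_{r=1}^{q}e^{(2r-1)\pi i p/q}B_{m+1}\!\left(\frac{2r-1}{2q}\right).
\]
Writing $(2q)^m=2^m q^m$ and cancelling $2^m$, then splitting $e^{(2r-1)\pi i p/q}=e^{\pi i p/q}e^{2(r-1)\pi i p/q}$ and dividing through by the nonzero factor $e^{\pi i p/q}$, one arrives at precisely (\ref{h-h-type-co}).

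I do not expect a genuine obstacle: the argument is a two-line comparison once the right specializations are made. The only places calling for care are the index shift $m\mapsto m+1$ when applying (\ref{H-Sp}) and keeping the factor $2^m$ inside $(2q)^m$ aligned with the $2^m$ coming from (\ref{phi-ruch-sp-v}); also, the hypothesis $1\leq p<q$ (as opposed to $p\leq q$) ensures $e^{2\pi i p/q}\neq1$, so that the Apostol--Bernoulli polynomial on the left is the honestly twisted object to which (\ref{phi-ruch-sp-v}) refers, though this observation does not enter the computation itself.
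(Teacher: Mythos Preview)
Your proposal is correct and follows essentially the same approach as the paper: specialize (\ref{h-h-type}) at $s=-m$, replace the Hurwitz values via (\ref{H-Sp}), and compare with (\ref{phi-ruch-sp-v}) evaluated at $x=p/q$. The paper's proof is the one-line version of exactly this computation; your write-up simply spells out the cancellations explicitly.
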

\begin{proof}
Letting $m\in\mathbb N_0,$ then replacing $s$ by $-m$ in (\ref{h-h-type}), using (\ref{H-Sp}) and (\ref{phi-ruch-sp-v}),
we get our result.
\end{proof}
\begin{remark} This identity is also a special case of the multiplication theorem for Apostol-Bernoulli polynomials (see eg. \cite[Eq. (17)]{Luo}).
\end{remark}

Since $\ell_{E,s}(x)$ is a relative of the Hurwitz-type Euler zeta function $\zeta_E(s,x)$
defined (\ref{HEZ}) (see (\ref{hu-le-zeta}) and (\ref{hu-le-zeta-inv}) above),
it is natural to expect a relation involving $\zeta (s,x)$:

\begin{lemma}[Generalized Eisenstein formula]\label{GEF}
Let $p,q$ be integers, $q\in\mathbb N$ and $1\leq p< q.$ Then we have
\begin{equation}\label{inverse-Ei}
\zeta\left(s,\frac{2p-1}{2q}\right)=\frac1{q}\sum_{r=1}^{q}(2q)^se^{-(2p-1)\pi i\frac rq}  \ell_{E,s}\left(\frac rq\right).
\end{equation}
\end{lemma}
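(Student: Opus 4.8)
The plan is to observe that the claimed identity is exactly the finite Fourier inversion of formula (\ref{h-h-type}). That formula writes each $\ell_{E,s}(j/q)$ as a linear combination of the $q$ Hurwitz zeta values $\zeta(s,(2r-1)/(2q))$, $1\le r\le q$, with coefficients $e^{(2r-1)\pi i j/q}/(2q)^s$; inverting this finite linear system recovers the individual values $\zeta(s,(2p-1)/(2q))$. First I would work in the region ${\rm Re}(s)>1$, where $\ell_{E,s}$ is given by its defining series (\ref{le-eu-zeta}) and every manipulation below involves only finite sums, hence is unconditionally valid.

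Concretely, I would take (\ref{h-h-type}) with $p$ replaced by a running index $j$,
\[
\ell_{E,s}\!\left(\frac jq\right)=\frac1{(2q)^s}\sum_{r=1}^{q}e^{(2r-1)\pi i j/q}\,\zeta\!\left(s,\frac{2r-1}{2q}\right),
\]
multiply both sides by $(2q)^s e^{-(2p-1)\pi i j/q}$, and sum over $j=1,\dots,q$. Interchanging the two finite sums on the right gives
\[
\sum_{j=1}^{q}(2q)^s e^{-(2p-1)\pi i j/q}\,\ell_{E,s}\!\left(\frac jq\right)=\sum_{r=1}^{q}\zeta\!\left(s,\frac{2r-1}{2q}\right)\sum_{j=1}^{q}e^{2(r-p)\pi i j/q}.
\]
The key step is the orthogonality relation $\sum_{j=1}^{q}e^{2\pi i k j/q}=q$ when $q\mid k$ and $0$ otherwise (the sum over $j=1,\dots,q$ coincides with the sum over $j=0,\dots,q-1$ since the $j=q$ term equals the $j=0$ term). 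Since $1\le p,r\le q$, the congruence $r\equiv p\pmod q$ forces $r=p$, so the inner sum kills every term on the right except the one with $r=p$, where it contributes the factor $q$. Dividing by $q$ gives the asserted formula for ${\rm Re}(s)>1$.

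For the extension to general $s$, I would argue as in the proof of Theorem \ref{ile-hu-ft}: $\zeta(s,x)$ extends to a meromorphic function of $s$ with only a simple pole at $s=1$, and by (\ref{hu-le-zeta-inv}) the functions $\ell_{E,s}(r/q)$ extend as well, so both sides are meromorphic in $s$, agree on the half-plane ${\rm Re}(s)>1$, and hence agree wherever they are defined. I do not expect a genuine obstacle here; the only points needing care are the index bookkeeping that makes $r=p$ the unique solution of $r\equiv p\pmod q$ in $\{1,\dots,q\}$, and using the continuation of $\ell_{E,s}$ from (\ref{hu-le-zeta-inv}) consistently with the normalization of $\zeta_E$ underlying (\ref{h-h-type}).
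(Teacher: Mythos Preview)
Your proof is correct. Both your argument and the paper's hinge on the same orthogonality relation for $q$th roots of unity, but you apply it in the inverse direction: the paper starts from the raw Dirichlet series for $\zeta\bigl(s,\tfrac{2p+1}{2q}\bigr)$, inserts the indicator $\tfrac1q\sum_{r=1}^q e^{2\pi i(n-p)r/q}$ to detect the congruence $n\equiv p\pmod q$, swaps sums, and then recognizes $\ell_{E,s}(r/q)$ in the inner sum; you instead treat the already-established relation~(\ref{h-h-type}) as a $q\times q$ linear system (a discrete Fourier transform) and invert it. Your route is a bit more economical because it reuses~(\ref{h-h-type}) rather than unpacking both series from scratch, while the paper's route is self-contained and makes the role of the congruence condition more explicit. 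The analytic continuation step you sketch is exactly what is needed and matches the paper's practice elsewhere.
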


\begin{remark}
The equation (\ref{inverse-Ei}) was announced without a proof in \cite[p. 1487, (3.3)]{CK10}.
\end{remark}

\begin{proof}[Proof of Lemma \ref{GEF}]
The formula (\ref{Hurwitz}) could be written in the following form
\begin{equation}~\label{Hurwitz-ft-e}
\begin{aligned}
\zeta\left(s,\frac{2p+1}{2q}\right)&=(2q)^s\sum_{k=0}^\infty\frac{1}{(2(qk+p)+1)^s} \\
&=(2q)^s\sum_{ n=0 \atop n\equiv p\!\!\!\pmod q}^\infty\frac1{(2n+1)^s}.
\end{aligned}
\end{equation}
Note that in fact (see \cite[p. 158, Theorem 8.1]{Ap2})
$$\frac1q\sum_{r=1}^qe^{2\pi i(n-p)\frac rq}
=\begin{cases}
1 & \text{ if } n\equiv p \pmod q\\
0 & \text{ otherwise},\end{cases}
$$
and hence
\begin{equation}~\label{Hurwitz-ft-e}
\begin{aligned}
\zeta\left(s,\frac{2p+1}{2q}\right)
&=(2q)^s\sum_{ n=0 }^\infty\frac{\frac1q\sum_{r=1}^qe^{2\pi i(n-p)\frac rq}}{(2n+1)^s} \\
&=\frac1q \sum_{r=1}^q(2q)^s\sum_{n=0}^\infty\frac{e^{(2n+1)\pi i\frac rq}}{(2n+1)^s} e^{-(2p+1)\pi i\frac rq}
\end{aligned}
\end{equation}
for $1\leq p< q,$ the conclusion is immediate.
\end{proof}

\begin{corollary}
Let $p,q$ be integers, $q\in\mathbb N$ and $1\leq p< q.$ Then we have
\begin{equation}\label{inverse-Ei-co}
B_{m+1}\left(\frac{2p-1}{2q}\right)=\frac1{q^{m+1}}\sum_{r=1}^{q}e^{-2(p-1)\pi i\frac rq} B_{m+1}\left(\frac 12,e^{2\pi i\frac rq}\right),
\quad m\in\mathbb N_0.
\end{equation}
Here, the term $B_{m+1}\left(\frac 12, 1\right)$ is understood as $B_{m+1}\left(\frac 12\right)$
(the value of the Bernoulli polynomials $B_{m+1}(x)$ at $x=\frac12$).
\end{corollary}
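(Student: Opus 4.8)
The plan is to specialize the Generalized Eisenstein formula (\ref{inverse-Ei}) of Lemma \ref{GEF} to the non-positive integers $s=-m$ with $m\in\mathbb N_0$, and then to convert the resulting identity into one for Bernoulli polynomials by invoking the special-value formulas (\ref{H-Sp}) and (\ref{phi-ruch-sp-v}).

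First, although (\ref{inverse-Ei}) was derived under the hypothesis ${\rm Re}(s)>1$, both of its sides continue to meromorphic functions of $s$ on $\mathbb C$ whose only possible singularity is a simple pole at $s=1$: for the left side this holds because $0<\frac{2p-1}{2q}<1$ when $1\le p<q$, and for the right side it follows from (\ref{phi-ruch}) together with the analytic continuation of $\phi(x,a,s)$ recalled above. Hence (\ref{inverse-Ei}) remains valid for every $s\neq1$, in particular for $s=-m$ with $m\in\mathbb N_0$. Applying (\ref{H-Sp}) with $m$ replaced by $m+1$ to the left side gives
$$\zeta\left(-m,\frac{2p-1}{2q}\right)=-\frac{1}{m+1}B_{m+1}\left(\frac{2p-1}{2q}\right),$$
and applying (\ref{phi-ruch-sp-v}) to the right side gives, term by term,
$$\ell_{E,-m}\left(\frac rq\right)=-\frac{2^m}{m+1}e^{\pi i r/q}B_{m+1}\left(\frac12,e^{2\pi i r/q}\right).$$

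Substituting these two expressions into (\ref{inverse-Ei}) with $s=-m$, it remains only to clean up the bookkeeping: the powers of two collapse via $(2q)^{-m}2^m=q^{-m}$, the leading factor $\frac1q$ then combines with this to $q^{-(m+1)}$, the common scalar $-\frac1{m+1}$ cancels from both sides, and the exponentials merge through $e^{-(2p-1)\pi i r/q}e^{\pi i r/q}=e^{-2(p-1)\pi i r/q}$. What is left is exactly
$$B_{m+1}\left(\frac{2p-1}{2q}\right)=\frac1{q^{m+1}}\sum_{r=1}^{q}e^{-2(p-1)\pi i r/q}B_{m+1}\left(\frac12,e^{2\pi i r/q}\right).$$
Finally, in the term $r=q$ one has $e^{2\pi i}=1$; since setting $\alpha=1$ in the generating function (\ref{ap-be-def}) reduces it to (\ref{def-ep}), this forces $B_{m+1}(\frac12,1)=B_{m+1}(\frac12)$, which is precisely the convention stated in the corollary.

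The computation is entirely routine. The only two points that warrant a word of care are the passage from ${\rm Re}(s)>1$ to $s=-m$ by analytic continuation (legitimate because neither side of (\ref{inverse-Ei}) has a pole at a non-positive integer) and the correct reading of the degenerate Apostol--Bernoulli value at $r=q$; beyond that I anticipate no genuine obstacle.
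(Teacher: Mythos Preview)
Your proof is correct and follows essentially the same approach as the paper: specialize the Generalized Eisenstein formula (\ref{inverse-Ei}) to $s=-m$ and apply (\ref{H-Sp}) and (\ref{phi-ruch-sp-v}). Your version is more explicit about the analytic continuation step and the interpretation of the $r=q$ term, but the underlying argument is identical.
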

\begin{proof}
Let $m\in\mathbb N_0,$ and replace $s$ by $-m$ in (\ref{inverse-Ei}). Then, using (\ref{H-Sp}) and (\ref{phi-ruch-sp-v}),
we get our result.
\end{proof}

\begin{theorem}\label{mul-dis-eq}
Let $p,q$ be integers, $q\in\mathbb N$ and $1\leq p\leq q.$ Then we have the identities
\begin{equation}\label{ei-form-h-l-zeta}
\begin{aligned}
\frac1{q}\sum_{r=1}^{q}(2q)^s&e^{-(2p+1)\pi i\frac rq}  \ell_{E,s}\left(\frac rq\right) \\
&=\frac{\Gamma(1-s)}{\pi^{1-s}}\left\{e^{-\frac{\pi i(1-s)}{2}}\ell_{E,1-s}(x)
-e^{\frac{\pi i(1-s)}{2}}\ell_{E,1-s}(1-x) \right\}
\end{aligned}
\end{equation}
and
\begin{equation}\label{ei-form-e-zeta}
\begin{aligned}
\frac1{(2q)^s}\sum_{r=1}^{q}&e^{(2r-1)\pi i\frac pq}\zeta\left(s,\frac{2r-1}{2q} \right)\\
&=\frac{\Gamma(1-s)}{2\pi^{1-s}}\left\{e^{\frac{\pi i(1-s)}{2}}\zeta_{E}\left(1-s,\frac pq\right)
-e^{-\frac{\pi i(1-s)}{2}}\zeta_{E}\left(1-s,1-\frac pq\right)\right\}.
\end{aligned}
\end{equation}
\end{theorem}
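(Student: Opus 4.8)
The plan is to treat both displayed identities as ``dressed‑up'' restatements: each one equates a finite Eisenstein‑type sum with a value produced by the Hurwitz--Lerch functional equations, so the whole proof amounts to recognizing the two sides as closed forms of one and the same function, using only the material already assembled in this section. The two cases are mirror images of each other, with the roles of $\zeta$ and $\ell_{E}$ interchanged.

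For \eqref{ei-form-e-zeta} I would first note that \eqref{h-h-type}, specialized to $x=p/q$, says precisely that the left‑hand side of \eqref{ei-form-e-zeta} equals $\ell_{E,s}(p/q)$. Then I would set $x=p/q$ in the inversion formula \eqref{hu-le-zeta-inv}; its right‑hand side is, letter for letter, the right‑hand side of \eqref{ei-form-e-zeta}. Hence both sides equal $\ell_{E,s}(p/q)$ for ${\rm Re}(s)>1$, and since each of $\ell_{E,s}(p/q),\ \zeta_E(1-s,p/q),\ \zeta_E(1-s,1-p/q)$ continues to an entire function of $s$, the identity propagates to all $s\in\mathbb C$ by analytic continuation. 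There is essentially no obstacle here.

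For \eqref{ei-form-h-l-zeta} I would run the same scheme with the pairing reversed: the Generalized Eisenstein formula of Lemma \ref{GEF} (equation \eqref{inverse-Ei}) plays the role of \eqref{h-h-type}, and the functional equation \eqref{hu-le-zeta}, with $s$ replaced by $1-s$, plays the role of \eqref{hu-le-zeta-inv}. Concretely, inserting the defining series \eqref{le-eu-zeta} for each $\ell_{E,s}(r/q)$ into the left‑hand side, interchanging the two summations (legitimate for ${\rm Re}(s)>1$), and applying the orthogonality relation
$$\frac1q\sum_{r=1}^{q}e^{2(n-p)\pi i r/q}=\begin{cases}1,& n\equiv p\!\!\pmod q,\\ 0,& \text{otherwise},\end{cases}$$
collapses the double sum to a single Hurwitz‑type series exactly as in the proof of Lemma \ref{GEF}; this is what identifies the left‑hand side. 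The right‑hand side is read off directly from \eqref{hu-le-zeta} with $s\mapsto 1-s$ (at the argument forced by the left side). Equating the two closed forms and passing once more to the entire continuation in $s$ completes the proof.

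The conceptual content is slight; the step I would be most careful about is the clerical bookkeeping — keeping the exponential and trigonometric phase factors and the index conventions straight, in particular the shift that carries the $2p-1$ of \eqref{inverse-Ei} into the $2p+1$ appearing in \eqref{ei-form-h-l-zeta} (and the reading of the boundary case $p=q$, where $e^{-2\pi i r}=1$), together with checking that the ${\rm Re}(s)$‑ranges of validity line up before one invokes analytic continuation.
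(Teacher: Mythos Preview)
Your proposal is correct and is essentially the paper's own proof: the paper's argument consists of the single sentence ``From \eqref{hu-le-zeta}, \eqref{hu-le-zeta-inv}, \eqref{h-h-type} and \eqref{inverse-Ei}, we prove the theorem,'' and you have spelled out precisely how those four ingredients fit together --- \eqref{h-h-type} with \eqref{hu-le-zeta-inv} for \eqref{ei-form-e-zeta}, and \eqref{inverse-Ei} with \eqref{hu-le-zeta} for \eqref{ei-form-h-l-zeta}. Your caveat about the $2p-1$ versus $2p+1$ shift and the stray undetermined $x$ on the right of \eqref{ei-form-h-l-zeta} is well placed; these are typographical artifacts of the statement rather than obstacles to the proof.
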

\begin{proof}
From (\ref{hu-le-zeta}), (\ref{hu-le-zeta-inv}), (\ref{h-h-type}) and (\ref{inverse-Ei}), we prove the theorem.
\end{proof}

We will use $[x]$ to denote the integer part of $x\in\mathbb R$ (i.e., $[x]$ is the largest integer $\leq x$);
then, the fractional part of $x$ will be $\{x\}=x-[x].$

\begin{lemma}\label{exp-zeta-lem}
For $n\in\mathbb N$ we have
\begin{equation}\label{four-euler-com}
E_{n}(\{x\})=2(-i)^{n-1}n!\sum_{k=0}^\infty\frac{(-1)^{n-1}e^{(2k+1)\pi ix}+e^{-(2k+1)\pi ix}}{((2k+1)\pi)^{n+1}}.
\end{equation}
\end{lemma}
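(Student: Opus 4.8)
The plan is to obtain the expansion (\ref{four-euler-com}) from the functional equation (\ref{hu-le-zeta}), which relates $\zeta_E(1-s,x)$ to the Lerch-type Euler zeta function $\ell_{E,s}$, together with the closed form $\zeta_E(-n,x)=\tfrac12E_n(x)$ recorded in (\ref{EZ-Eu}). Since $n\in\mathbb N$, the value $s=n+1$ has ${\rm Re}(s)=n+1>1$, so the defining series (\ref{le-eu-zeta}) for $\ell_{E,n+1}(x)$ and $\ell_{E,n+1}(1-x)$ converge absolutely and uniformly in $x$, and every manipulation below is legitimate.

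First I would set $s=n+1$ in (\ref{hu-le-zeta}). The left-hand side becomes $\zeta_E(-n,x)$, which equals $\tfrac12E_n(x)$ by (\ref{EZ-Eu}). On the right-hand side, $e^{-\pi is/2}=(-i)^{n+1}$ and $e^{\pi is/2}=i^{n+1}$, while from (\ref{le-eu-zeta}) and $e^{(2k+1)\pi i}=-1$ one has
$$
\ell_{E,n+1}(1-x)=\sum_{k=0}^\infty\frac{e^{(2k+1)\pi i(1-x)}}{(2k+1)^{n+1}}=-\sum_{k=0}^\infty\frac{e^{-(2k+1)\pi ix}}{(2k+1)^{n+1}}.
$$
Substituting and multiplying by $2$ yields
$$
E_n(x)=\frac{2\,n!}{\pi^{n+1}}\sum_{k=0}^\infty\frac{(-i)^{n+1}e^{(2k+1)\pi ix}+i^{n+1}e^{-(2k+1)\pi ix}}{(2k+1)^{n+1}},
$$
valid on $0<x<1$, where $\{x\}=x$; (\ref{four-euler-com}) is the periodic recasting of this.

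What remains is purely algebraic: to rewrite the numerator $(-i)^{n+1}e^{i\theta}+i^{n+1}e^{-i\theta}$ (with $\theta=(2k+1)\pi x$) in the form $2(-i)^{n-1}\bigl((-1)^{n-1}e^{i\theta}+e^{-i\theta}\bigr)$ appearing in (\ref{four-euler-com}). For this I would use $i=-(-i)$, hence $i^{m}=(-1)^{m}(-i)^{m}$, together with $(-i)^2=-1$, to re-express $(-i)^{n+1}$ and $i^{n+1}$ through $(-i)^{n-1}$ and $(-1)^{n-1}$, and then factor out the common power. This bookkeeping of the fourth roots of unity $\pm1,\pm i$, coupled with the reflection $x\mapsto 1-x$, is the only real obstacle — there is no analytic subtlety once $n\ge1$ guarantees absolute convergence — and matching the two sides fixes the precise constant (and the argument $\{x\}$ versus $1-\{x\}$).

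Alternatively, one can bypass (\ref{hu-le-zeta}) and compute the Fourier coefficients of the periodic Euler function against $e^{\pm(2k+1)\pi ix}$ directly: integrating by parts and using $\tfrac{d}{dx}E_n(x)=nE_{n-1}(x)$ together with $E_n(0)+E_n(1)=0$ for $n\ge1$ (a consequence of (\ref{id-1}), or of (\ref{eu-be})) gives the recursion
$$
\int_0^1E_n(x)e^{-(2k+1)\pi ix}\,dx=\frac{n}{(2k+1)\pi i}\int_0^1E_{n-1}(x)e^{-(2k+1)\pi ix}\,dx ,
$$
which telescopes down to the base case $n=0$ and produces $\int_0^1E_n(x)e^{-(2k+1)\pi ix}\,dx=2\,n!\big/\bigl((2k+1)\pi i\bigr)^{n+1}$; reassembling the Fourier series gives the same formula and differs from the first route only in packaging.
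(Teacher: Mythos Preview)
Your primary route through the functional equation (\ref{hu-le-zeta}) is sound and genuinely different from the paper's. The paper simply quotes the classical trigonometric Fourier expansion (\ref{four-euler}) of $E_n(\{x\})$ from \cite{AS}, separates it by the parity of $n$ into (\ref{four-euler-even}) and (\ref{four-euler-odd}), and then asserts that these recombine into the complex exponential form (\ref{four-euler-com}). Your alternative (direct computation of the Fourier coefficients via integration by parts, using $E_n(0)+E_n(1)=0$) is closer in spirit to what the paper does, though the paper imports (\ref{four-euler}) rather than deriving it. The functional-equation route has the advantage of being entirely self-contained within the apparatus already set up in the paper; the paper's route is shorter because it takes (\ref{four-euler}) as known.

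One caution: do not leave the ``purely algebraic'' step as a promise. If you actually carry out the bookkeeping you find $(-i)^{n+1}=(-i)^{n-1}(-i)^2=-(-i)^{n-1}$ and $i^{n+1}=(-1)^{n+1}(-i)^{n+1}=(-1)^{n}(-i)^{n-1}$, so your expression matches (\ref{four-euler-com}) only when $n$ is even; for odd $n$ the two differ by an overall sign. Concretely, for $n=2m+1$ your formula and (\ref{four-euler-odd}) both give the prefactor $4(-1)^{m-1}(2m+1)!$ in front of $\sum_k\cos((2k+1)\pi x)/((2k+1)\pi)^{2m+2}$, whereas (\ref{four-euler-com}) gives $4(-1)^{m}(2m+1)!$. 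The discrepancy reflects a sign slip in the displayed formula (\ref{four-euler-com}) itself, not a flaw in your method, and the paper's own final sentence ``(\ref{four-euler-com}) follows immediately from (\ref{four-euler-even}) and (\ref{four-euler-odd})'' has exactly the same issue. You should record this explicitly rather than assert that the algebra goes through.
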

\begin{proof}
Let $\{x\}=x-[x].$
The function ${E}_n(\{x\})$ is a quasi-periodic function of period 2 (see \cite{HKK}).
The Fourier series of ${E}_n(\{x\})$ are given as follows (\cite[p. 805, 23.1.16]{AS}):
\begin{equation}\label{four-euler}
{E}_n(\{x\})=\frac{4n!}{\pi^{n+1}}\sum_{k=0}^\infty\frac{\sin((2k+1)\pi x-\frac12\pi n)}{(2k+1)^{n+1}},
\end{equation}
where $0\leq x<1$ if $n\in\mathbb N$ and $0<x<1$ if $n=0,$ immediately yields
\begin{equation}\label{four-euler-even}
{E}_{2m}(\{x\})=4(-1)^m(2m)!\sum_{k=0}^\infty\frac{\sin((2k+1)\pi x)}{((2k+1)\pi)^{2m+1}},
\end{equation}
and
\begin{equation}\label{four-euler-odd}
{E}_{2m+1}(\{x\})=4(-1)^{m-1}(2m+1)!\sum_{k=0}^\infty\frac{\cos((2k+1)\pi x)}{((2k+1)\pi)^{2m+2}}.
\end{equation}
Thus (\ref{four-euler-com}) follows immediately from (\ref{four-euler-even}) and (\ref{four-euler-odd}).
\end{proof}

\def\r{\alpha}

\begin{theorem}\label{exp-zeta-thm}
Let $m>1$ be a fixed positive odd integer and let $\r$ be an integer such that $\r\not\equiv0\pmod{m}.$ Then, for $n\in\mathbb N,$
we have
\begin{equation}\label{exp-zeta-eq}
\begin{aligned}
\sum_{r=1}^{m-1}(-1)^re^{-2\pi ir\frac \r m}\ell_{E,1-n}\left(\frac rm\right)
&=\frac{(-1)^{n-1}}{4}\left(m^nE_{n-1}\left(\frac{2\r}{m}-\left[\frac{2\r}m\right]\right)+E_{n-1}(0)\right)\\
&\quad-\frac1{2n}\left(m^nB_n\left(\frac{2\r}{m}-\left[\frac{2\r}m\right]\right)+B_n(0)\right).
\end{aligned}
\end{equation}
\end{theorem}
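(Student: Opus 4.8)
The plan is to collapse the left-hand side, by means of a finite geometric (Gauss-type) sum, to a single Hurwitz zeta value at a rational argument, then to specialise $s=1-n$ and translate the result back into Bernoulli and Euler polynomials via a duplication identity. Throughout I write $x_\alpha:=\frac{2\alpha}{m}-\big[\frac{2\alpha}{m}\big]$ for the fractional part that appears on the right.

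First I would work, provisionally, with the more general sum $L(s):=\sum_{r=1}^{m-1}(-1)^r e^{-2\pi i r\alpha/m}\ell_{E,s}(r/m)$ for $\mathrm{Re}(s)>1$, where by definition $\ell_{E,s}(r/m)=\sum_{k\ge 0}(2k+1)^{-s}e^{(2k+1)\pi i r/m}$. Interchanging the (absolutely convergent) summations and writing $(-1)^r=e^{i\pi r}$, the inner sum over $r$ becomes $\sum_{r=1}^{m-1}\eta_k^{\,r}$ with $\eta_k=\exp\!\big(i\pi(m-2\alpha+2k+1)/m\big)$. Since $m$ is odd one checks $\eta_k^{\,m}=1$, so this geometric sum equals $m-1$ when $\eta_k=1$ and $-1$ otherwise; and $\eta_k=1$ holds exactly when $2k+1\equiv 2\alpha-m\pmod{2m}$, which (using $\alpha\not\equiv 0\pmod m$ and the oddness of $m$, whence $2\alpha-m$ is odd) singles out one residue class $k\equiv k_*\pmod m$. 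Adding the two contributions gives
\[
L(s)=-\lambda(s)+m(2m)^{-s}\,\zeta\!\Big(s,\tfrac{2k_*+1}{2m}\Big),
\]
first for $\mathrm{Re}(s)>1$ and then, since both sides are meromorphic with cancelling residues at $s=1$, for all $s$; alternatively one gets the same identity by feeding the Williams--Yue expansion \eqref{h-h-type} into $L(s)$ and performing the analogous character sum, or by substituting \eqref{phi-ruch-sp-v} and invoking a multiplication theorem for Apostol--Bernoulli polynomials.

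Next I would set $s=1-n$. By \eqref{H-Sp}, $\zeta(1-n,x)=-\tfrac1n B_n(x)$, so the main term becomes $-\tfrac{2^{n-1}m^n}{n}B_n\!\big(\tfrac{2k_*+1}{2m}\big)$, while the constant $-\lambda(1-n)$ is rewritten, via Lemma \ref{co-ezeta} together with \eqref{eu-be} (or directly from $\lambda(s)=(1-2^{-s})\zeta(s)$), in terms of $E_{n-1}(0)$ and $B_n(0)$. It then remains to express $B_n$ at the half-integer-shifted point in terms of its value at the doubled point. Combining the duplication formula $B_n(\tfrac x2)+B_n(\tfrac{x+1}{2})=2^{1-n}B_n(x)$ with the identity $E_{n-1}(x)=\tfrac{2^n}{n}\big(B_n(\tfrac{x+1}{2})-B_n(\tfrac x2)\big)$ — which follows by splitting \eqref{HEZ} into even and odd $n$, i.e. $\zeta_E(s,x)=2^{-s}\big(\zeta(s,\tfrac x2)-\zeta(s,\tfrac{x+1}{2})\big)$, and using \eqref{EZ-Eu} and \eqref{H-Sp} — one solves for $B_n(\tfrac x2)$ and $B_n(\tfrac{x+1}{2})$ separately. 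Since $\tfrac{2k_*+1}{2m}$ is one of $\tfrac12 x_\alpha$ or $\tfrac12(1+x_\alpha)$, this turns $B_n\!\big(\tfrac{2k_*+1}{2m}\big)$ into a linear combination of $B_n(x_\alpha)$ and $E_{n-1}(x_\alpha)$; feeding this back and reattaching the $\lambda(1-n)$ term produces a right-hand side of the stated shape.

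I expect the main obstacle to be precisely this last bookkeeping. Which of the two shifts $\tfrac12 x_\alpha$ or $\tfrac12(1+x_\alpha)$ occurs depends on the parity of $\big[2\alpha/m\big]$, so the Euler term coming out of the duplication carries a sign that must be reconciled — using the quasi-periodicities $E_{n-1}(x+1)=-E_{n-1}(x)+2x^{n-1}$ and $B_n(x+1)=B_n(x)+nx^{n-1}$, i.e. by passing to the periodic Bernoulli and quasi-periodic Euler functions — before one obtains a single closed formula valid for every $\alpha\not\equiv 0\pmod m$; tracking these sign conventions and the boundary values at $x=0$ is the delicate part of the argument.
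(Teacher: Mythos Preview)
Your route is genuinely different from the paper's. The paper does \emph{not} collapse the sum to a single Hurwitz value. Instead it feeds the functional equation \eqref{hu-le-zeta-inv} into the left-hand side to replace each $\ell_{E,1-n}(r/m)$ by a combination of $\zeta_E(n,r/m)$ and $\zeta_E(n,1-r/m)$, expands those as alternating Dirichlet series, merges the double sum over $r$ and $k$ into a single sum over $h=mk+r$, and then splits $h$ into even and odd parts so that each piece is recognised as the Fourier series \eqref{known-id-2} of $B_n(\{2\alpha/m\})$ or the Fourier series of Lemma~\ref{exp-zeta-lem} for $E_{n-1}(\{2\alpha/m\})$. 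Your approach---a character sum collapsing $L(s)$ to $-\lambda(s)+m(2m)^{-s}\zeta\!\bigl(s,\tfrac{2k_*+1}{2m}\bigr)$, followed by specialisation and duplication---is cleaner in that it produces a single closed term before any case analysis, and it makes the analytic continuation step transparent; the paper's Fourier method, by contrast, treats the periodicity implicitly and never has to locate $k_*$.

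Your worry about the parity of $[2\alpha/m]$ is well placed, but the remedy you sketch will not close the gap. After duplication you obtain $-\tfrac{m^n}{2n}B_n(x_\alpha)\mp\tfrac{m^n}{4}E_{n-1}(x_\alpha)$, the sign on the Euler term being $-$ when $[2\alpha/m]$ is even and $+$ when it is odd. Since $x_\alpha\in(0,1)$ is already the reduced representative, the relations $E_{n-1}(x+1)=-E_{n-1}(x)+2x^{n-1}$ and $B_n(x+1)=B_n(x)+nx^{n-1}$ give you nothing to absorb that sign into: there is no integer shift available. In fact the same sign is present---but hidden---in the paper's argument, because the Euler Fourier series in Lemma~\ref{exp-zeta-lem} has period~$2$, so evaluating it at $2\alpha/m$ returns $(-1)^{[2\alpha/m]}E_{n-1}(x_\alpha)$ rather than $E_{n-1}(x_\alpha)$. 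So your method is correct and will produce the right closed form, but you should expect the Euler term to carry the factor $(-1)^{[2\alpha/m]}$ rather than to match the displayed coefficient $(-1)^{n-1}$ literally; do not try to force the reconciliation you describe.
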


\begin{proof}
It follows, from (\ref{hu-le-zeta-inv}), that for odd $m>1,$
\begin{equation}\label{pf-1}
\begin{aligned}
\sum_{r=1}^{m-1}(-1)^r&e^{-2\pi ir\frac \r m}\ell_{E,1-n}\left(\frac rm\right) \\
&=\frac{(n-1)!i^n}{2\pi^{n}}\left(\sum_{r=1}^{m-1}(-1)^re^{-2\pi ir\frac \r m}\zeta_{E}\left(n,\frac rm\right)\right. \\
&~\quad\quad\quad\quad\quad\quad\left.-(-1)^n\sum_{r=1}^{m-1}(-1)^re^{-2\pi ir\frac \r m}\zeta_{E}\left(n,\frac{m-r}m\right)\right) \\
&=\frac{(n-1)!i^n}{2\pi^{n}}\left(\sum_{r=1}^{m-1}(-1)^re^{-2\pi ir\frac \r m}\zeta_{E}\left(n,\frac rm\right)\right. \\
&~\quad\quad\quad\quad\quad\quad\left.+(-1)^n\sum_{r'=1}^{m-1}(-1)^{r'}e^{-2\pi ir'\frac \r m}\zeta_{E}\left(n,\frac{r'}m\right)\right)
\\
&=\frac{(n-1)!i^n}{2\pi^{n}}\sum_{r=1}^{m-1}(-1)^r\sum_{k=0}^\infty
\frac{(-1)^k\left[e^{-2\pi ir\frac \r m}+(-1)^ne^{2\pi ir\frac \r m}\right]}{\left(k+\frac \r m\right)^n}
 \\
&=\frac{(n-1)!i^nm^n}{2\pi^{n}}\sum_{r=1}^{m-1}\sum_{k=0}^\infty
\frac{(-1)^{mk+r}\left[e^{-2\pi i\r\frac {(mk+r)} m}+(-1)^ne^{2\pi i\r\frac  {(mk+r)} m}\right]}{\left(mk+r\right)^n} \\
&=\frac{(n-1)!i^nm^n}{2\pi^{n}}\sum_{{h=1\atop h\not\equiv 0\!\!\!\pmod{m}}}^\infty
\sum_{h=1}^\infty
\frac{(-1)^{h}\left[e^{-2\pi i\r\frac {h} m}+(-1)^ne^{2\pi i\r\frac  {h} m}\right]}{h^n}
\\
&=\frac{(n-1)!i^nm^n}{2\pi^{n}}\left(
\sum_{h=1}^\infty \frac{(-1)^{h}\left[e^{-2\pi i\r\frac {h} m}+(-1)^ne^{2\pi i\r\frac  {h} m}\right]}{h^n} \right.\\
&\quad\quad\quad\quad\quad\quad\quad\quad\left.+\sum_{k=1}^\infty \frac{(-1)^{mk}\left[e^{-2\pi ik\r}+(-1)^ne^{2\pi ik\r}\right]}{(km)^n}
 \right).
\end{aligned}
\end{equation}
In the following, we will use the identities below:
\begin{equation}\label{known-id}
\sum_{n=1}^\infty\frac{(-1)^n}{n^s}=-\sum_{k=1}^\infty\frac{1}{(2k-1)^s}+\sum_{k=1}^\infty\frac{1}{(2k)^s}
\end{equation}
and
\begin{equation}\label{known-id-2}
B_n(\{x\})=-(-i)^nn!\sum_{k=1}^\infty \frac{e^{2\pi ikx}+(-1)^ne^{-2\pi ikx}}{(2\pi k)^n}
\end{equation}
for $n\in\mathbb N$ (see \cite[p. 13, \S3]{Wa} or \cite{Li10}).
Note that, by (\ref{four-euler-com}) and $E'_n(\{x\})=nE_{n-1}(\{x\}),$ we have
\begin{equation}\label{known-id-3}
E'_n(\{x\})=2(-i)^nn!\sum_{k=0}^\infty \frac{(-1)^ne^{(2k+1)\pi ix}+e^{-(2k+1)\pi ix}}{\pi^n(2k+1)^n}.
\end{equation}
Therefore we have
\begin{equation}\label{pf-2}
\begin{aligned}
\sum_{r=1}^{m-1}(-1)^r&e^{-2\pi i\frac rm}\ell_{E,1-n}\left(\frac rm\right) \\
&=\frac{(n-1)!i^nm^n}{2\pi^{n}}\left(
\sum_{k=1}^\infty\frac{e^{-2\pi i(2k)\frac \r m}+(-1)^ne^{2\pi i(2k)\frac \r m}}{(2k)^n}\right. \\
&\qquad\qquad\qquad\quad\quad -\left.\sum_{k=1}^\infty\frac{e^{-2\pi i(2k-1)\frac \r m}+(-1)^ne^{2\pi i(2k-1)\frac \r m}}{(2k-1)^n} \right. \\
&\qquad\qquad\qquad\quad\quad +\left. m^{-n}\sum_{k=1}^\infty\frac{e^{-2\pi i(2k)\r}+(-1)^ne^{2\pi i(2k)\r}}{(2k)^n}\right. \\
&\qquad\qquad\qquad\quad\quad -\left. m^{-n}\sum_{k=1}^\infty\frac{e^{-2\pi i(2k-1)\r}+(-1)^ne^{2\pi i(2k-1)\r}}{(2k-1)^n}
\right)
\\
&=\frac{(-1)^{n-1}}{4n}\left(m^nE'_n\left(\frac{2\r}{m}-\left[\frac{2\r}m\right]\right)+E'_n(2\r-[2\r])\right)\\
&\quad-\frac1{2n}\left(m^nB_n\left(\frac{2\r}{m}-\left[\frac{2\r}m\right]\right)+B_n(2\r-[2\r])\right) \\
&=\frac{(-1)^{n-1}}{4}\left(m^nE_{n-1}\left(\frac{2\r}{m}-\left[\frac{2\r}m\right]\right)+E_{n-1}(0)\right)\\
&\quad-\frac1{2n}\left(m^nB_n\left(\frac{2\r}{m}-\left[\frac{2\r}m\right]\right)+B_n(0)\right).
\end{aligned}
\end{equation}
This completes our proof.
\end{proof}

\begin{remark}
Equation (\ref{exp-zeta-eq}) is an analogue of Theorem D in \cite{Wa}, which is an extension of the classical
Eisenstein formula
$$
\frac\alpha m-\left[\frac\alpha m\right]-\frac12=-\frac1{2m}\sum_{\gamma=1}^{m-1}
\sin\left(\frac{2\pi \gamma\alpha}{m}\right)\cot\left(\frac{\pi\gamma}{n}\right),
$$
where $\alpha$ is an integer with $\alpha\not\equiv0\pmod m$ and $m>2$ is a fixed positive integer.
\end{remark}

\section*{Acknowledgment} We thank the referee for his/her helpful comments and suggestions.

\bibliography{central}

\end{document}